\title{Posetal Diagrams for Logically-Structured\\Semistrict Higher Categories}
\newcommand{\titlerunning}{Posetal Diagrams for Semistrict Higher Categories}
\newcommand{\authorrunning}{C. Sarti, J. Vicary}
\newcommand{\Set}{\ensuremath{\mathrm{Set}}}
\newcommand{\Cat}{\ensuremath{\mathrm{Cat}}}
\newcommand{\FinPos}{\ensuremath{\mathrm{FPos}}}
\newcommand{\FinDLat}{\ensuremath{\mathrm{FDLat}}}
\newcommand{\Func}{\ensuremath{\mathrm{Func}}}
\newcommand{\Op}{\ensuremath{\mathrm{op}}}
\newcommand{\cat}[1]{\ensuremath{\mathrm{#1}}}
\newcommand{\func}[1]{\ensuremath{#1}}
\newcommand\constr[1]{\mathsf{#1}}
\newcommand{\id}{\ensuremath{\mathrm{id}}}
\newcommand{\ev}{\ensuremath{\mathrm{ev}}}
\newtheorem{nthm}{Theorem}[section]
\newtheorem{nlemma}[nthm]{Lemma}
\newtheorem{nprop}[nthm]{Proposition}
\newtheorem{ncor}[nthm]{Corollary}
\theoremstyle{definition}
\newtheorem{ndef}[nthm]{Definition}
\newtheorem{nexample}[nthm]{Example}
\newtheorem{nconstr}[nthm]{Construction}
\newcommand{\homotopyio}{\textsc{homotopy.io}\xspace}
\tikzset{curve/.style={settings={#1},to path={(\tikztostart)
    .. controls ($(\tikztostart)!\pv{pos}!(\tikztotarget)!\pv{height}!270:(\tikztotarget)$)
    and ($(\tikztostart)!1-\pv{pos}!(\tikztotarget)!\pv{height}!270:(\tikztotarget)$)
    .. (\tikztotarget)\tikztonodes}},
    settings/.code={\tikzset{quiver/.cd,#1}
        \def\pv##1{\pgfkeysvalueof{/tikz/quiver/##1}}},
    quiver/.cd,pos/.initial=0.35,height/.initial=0}
\tikzset{tail reversed/.code={\pgfsetarrowsstart{tikzcd to}}}
\tikzset{2tail/.code={\pgfsetarrowsstart{Implies[reversed]}}}
\tikzset{2tail reversed/.code={\pgfsetarrowsstart{Implies}}}
\tikzset{no body/.style={/tikz/dash pattern=on 0 off 1mm}}
\tikzset{reg/.style={dashed}}
\tikzset{sing/.style={}}
\newcommand{\figintrolinearzigzaglevels}{
\begin{tikzpicture}[xscale=1, thick, yscale=1.2, scale=.8]
\begin{pgfscope}
\draw [reg] (-.5,-1) to +(1.5,0);
\draw [sing] (-.5,0) to +(1.5,0);
\draw [reg] (-.5,1.) to +(1.5,0);
\draw [sing] (-.5,2) to +(1.5,0);
\draw [reg] (-.5,3) to +(1.5,0);
\node [thick, fill=white, circle, draw, inner sep=-5pt, minimum width=12pt] at (.25,0) {$a$};
\node [thick, fill=white, circle, draw, inner sep=-5pt, minimum width=12pt] at (.25,2) {$b$};
\end{pgfscope}
\node [left] at (-.5,-1) {$R_{\{\}}$};
\node [left] at (-.5,0) {$S_{a}$};
\node [left] at (-.5,1) {$R_{\{a\}}$};
\node [left] at (-.5,2) {$S_{b}$};
\node [left] at (-.5,3) {$R_{\{a,b\}}$};
\end{tikzpicture}
}
\newcommand{\figintrolinearzigzagzintervals}{
\begin{tikzpicture}[scale=1.25, font=\normalsize, scale=.8]
\node (0) at (-2,0) {$R_{\{\}}$};
\node (1) at (-1,1) {$S_{a}$};
\node (2) at (-2,2) {$R_{\{a\}}$};
\node (5) at (-1,3) {$S_{b}$};
\node (8) at (-2,4) {$R_{\{a,b\}}$};
\draw [->] (1) to (0);
\draw [->] (1) to (2);
\draw [->] (5) to (2);
\draw [->] (5) to (8);
\begin{scope}[black!40]
\node (4) at (0,2) {$S_{a,b}$};
\draw [->] (4) to (1);
\draw [->] (4) to (5);
\node [rotate=-135] at (-.7,2) {$\lrcorner$};
\end{scope}
\end{tikzpicture}
}
\newcommand{\figintroposetalzigzaglevels}{
\begin{tikzpicture}[xscale=2, thick, yscale=1.2]
\begin{pgfscope}
\path [clip, draw=none] (-.5,-2.05) rectangle +(2,4.1);
\draw [sing] (-1,-1) to +(3,3);
\draw [sing] (-1,-2) to +(3,3);
\draw [reg] (-1,-1.5) to +(3,3);
\draw [sing] (-1,2) to +(3,-3);
\draw [sing] (-1,1) to +(3,-3);
\draw [reg] (-1,1.5) to +(3,-3);
\draw [sing] (-1,0) to +(3,0);
\draw [reg] (-1,-2) to +(3,0);
\draw [reg] (-1,0) to +(3,0);
\draw [reg] (-1,2) to +(3,0);
\node [thick, fill=white, circle, draw, inner sep=-5pt, minimum width=12pt] at (0,0) {$x$};
\node [thick, fill=white, circle, draw, inner sep=-5pt, minimum width=12pt] at (1,0) {$y$};
\end{pgfscope}
\node [left] at (-.5,-2) {$[\{\},\{\}]$};
\node [left] at (-.5,-1.5) {$[\{\},\{y\}]$};
\node [left] at (-.5,-1) {$[\{y\},\{y\}]$};
\node [left] at (-.5,-.5) {$[\{y\},\{{x,}y\}]$};
\node [left] at (-.5,0) {$[\{\}, \{x,y\}]$};
\node [left] at (-.5,.5) {$[\{\},\{x\}]$};
\node [left] at (-.5,1) {$[\{x\},\{{x}\}]$};
\node [left] at (-.5,1.5) {$[\{x\},\{x,y\}]$};
\node [left] at (-.5,2) {$[\{x,y\},\{x,y\}]$};
\end{tikzpicture}
}
\newcommand{\figintroposetalzigzagintervals}{
\begin{tikzpicture}[scale=1.25,font=\small]
\node (0) at (0,0) {$[\{\},\{\}]$};
\node (1) at (-1,1) {$[\{\},\{x\}]$};
\node (2) at (-2,2) {$[\{x\},\{x\}]$};
\node (3) at (1,1) {$[\{\},\{y\}]$};
\node (5) at (-1,3) {$[\{x\},\{x,y\}]$};
\node (6) at (2,2) {$[\{y\},\{y\}]$};
\node (7) at (1,3) {$[\{y\},\{x,y\}]$};
\node (8) at (0,4) {$[\{x,y\},\{x,y\}]$};
\draw [->] (1) to (0);
\draw [->] (3) to (0);
\draw [->] (1) to (2);
\draw [->] (3) to (6);
\draw [->] (5) to (2);
\draw [->] (7) to (6);
\draw [->] (5) to (8);
\draw [->] (7) to (8);
\begin{scope}[black!40]
\node (4) at (0,2) {$[\{\},\{x,y\}]$};
\draw [->] (4) to (1);
\draw [->] (4) to (3);
\draw [->] (4) to (5);
\draw [->] (4) to (7);
\node [rotate=45] at (.75,2) {$\lrcorner$};
\node [rotate=-135] at (-.75,2) {$\lrcorner$};
\node [rotate=-45] at (0,1.25) {$\lrcorner$};
\node [rotate=135] at (0,2.75) {$\lrcorner$};
\end{scope}
\end{tikzpicture}
}
\newcommand{\figintervaldiag}{
\begin{tikzcd}[ampersand replacement=\&, row sep=12pt]
\&\& \textcolor{black!40}{[a,e]} \\
\& \textcolor{black!40}{[a,d]} \&\& \textcolor{black!40}{[b,e]} \& \textcolor{black!40}{[c,e]} \\
{[a,b]} \& {[a,c]} \& {[b,d]} \& {[c,d]} \& {[d,e]} \\
{[a,a]} \& {[b,b]} \& {[c,c]} \& {[d,d]} \& {[e,e]}
\arrow[from=3-3, to=4-4]
\arrow[from=3-3, to=4-2]
\arrow[from=3-1, to=4-2]
\arrow[from=3-1, to=4-1]
\arrow[from=3-2, to=4-1]
\arrow[from=3-2, to=4-3]
\arrow[from=3-4, to=4-3]
\arrow[from=3-4, to=4-4]
\arrow[color=black!40, from=2-2, to=3-3]
\arrow[color=black!40, from=2-2, to=3-1]
\arrow[color=black!40, from=2-2, to=3-2]
\arrow[color=black!40, from=2-2, to=3-4]
\arrow[from=3-5, to=4-4]
\arrow[from=3-5, to=4-5]
\arrow[color=black!40, from=2-4, to=3-3]
\arrow[color=black!40, from=2-4, to=3-5]
\arrow[color=black!40, from=2-5, to=3-4]
\arrow[color=black!40, from=2-5, to=3-5]
\arrow[color=black!40, from=1-3, to=2-2]
\arrow[color=black!40, from=1-3, to=2-4]
\arrow[color=black!40, from=1-3, to=2-5]
\end{tikzcd}
}
\newcommand{\figintervaldiagstringrepr}{
\begin{tikzpicture}[yscale=0.8]
\draw (0, 0.5) -- (0,1);
\draw (0, 5.5) -- (0,4);
\draw (0, 4) .. controls (1.6, 3.0) and (0.667, 1.833) .. (0,1);
\draw (0,1) .. controls (-0.667, 1.833) and (-1.6, 3.0) .. (0,4);
\node [thick, fill=white, circle, draw, inner sep=-5pt, minimum width=12pt] at (0.55,1.75) {$f$};
\node [thick, fill=white, circle, draw, inner sep=-5pt, minimum width=12pt] at (-0.55,1.75) {$g$};
\node [thick, fill=white, circle, draw, inner sep=-5pt, minimum width=12pt] at (0.8,3) {$g$};
\node [thick, fill=white, circle, draw, inner sep=-5pt, minimum width=12pt] at (-0.8,3) {$f$};
\node [thick, fill=white, circle, draw, inner sep=-5pt, minimum width=12pt] at (0,4.7) {$h$};
\end{tikzpicture}
}
\newcommand{\figlimalgorithm}{
\[\begin{tikzcd}[ampersand replacement=\&, row sep=13pt]
{(L,\func{L})} \\
{(L,\func{G}j)} \&\& {\func{F}j} \\
{(L,\func{G}j')} \&\&\& {\func{F}j'} \& {\constr{L}(\cat{C})} \\
\\
L \&\& {(\func{F}j)^0} \&\& \FinPos \\
\&\&\& {(\func{F}j')^0}
\arrow["{\func{F}h}", from=2-3, to=3-4]
\arrow["{(\rho_j,\id)}", from=2-1, to=2-3]
\arrow["{(\rho_{j'},\id)}"', from=3-1, to=3-4]
\arrow["{(\id_L,\func{G}h)}"', from=2-1, to=3-1]
\arrow["{\rho_j}", from=5-1, to=5-3]
\arrow["{\rho_{j'}}"', from=5-1, to=6-4]
\arrow["{(\func{F}f)^0}", from=5-3, to=6-4]
\arrow["{(\id_L,\eta)}"', from=1-1, to=2-1]
\arrow["{\func{U}}"', from=3-5, to=5-5]
\end{tikzcd}\]
}
\newcommand{\figequaliserofpullbacksquares}{
\[\begin{tikzcd}[ampersand replacement=\&, column sep=5pt, row sep=10pt]
\&\& \textcolor{black!40}{\func{Y}[f][d,d']} \&\&\& {\func{Y}[f][a,a']} \\
\& \textcolor{black!40}{\func{X}[d,d']} \&\&\& {\func{X}[a,a']} \\
{\func{W}[d,d']} \&\&\& {\func{W}[a,a']} \\
\&\& {\func{Y}[f][c,c']} \&\&\& {\func{Y}[f][b,b']} \\
\& {\func{X}[c,c']} \&\&\& {\func{X}[b,b']} \\
{\func{W}[c,c']} \&\&\& {\func{W}[b,b']}
\arrow[from=2-5, to=5-5]
\arrow[from=5-2, to=5-5]
\arrow[color=black!40, from=2-2, to=5-2]
\arrow[color=black!40, from=2-2, to=2-5]
\arrow[shift right=1, from=5-2, to=4-3]
\arrow[shift left=1, from=5-2, to=4-3]
\arrow["\Bigg{\lrcorner}"{anchor=center, pos=0.05}, color=black!40, draw=none, from=2-2, to=5-5]
\arrow[shift right=1, from=2-5, to=1-6]
\arrow[shift right=1, from=5-5, to=4-6]
\arrow[color=black!40, shift right=1, from=2-2, to=1-3]
\arrow[color=black!40, shift left=1, from=2-2, to=1-3]
\arrow[shift left=1, from=2-5, to=1-6]
\arrow[shift left=1, from=5-5, to=4-6]
\arrow[from=1-6, to=4-6]
\arrow[color=black!40, from=1-3, to=1-6]
\arrow[color=black!40, from=1-3, to=4-3]
\arrow[from=4-3, to=4-6]
\arrow[from=3-1, to=6-1]
\arrow[from=3-1, to=2-2]
\arrow[from=6-1, to=5-2]
\arrow[from=6-1, to=6-4]
\arrow[from=3-4, to=6-4]
\arrow[from=3-1, to=3-4]
\arrow[from=3-4, to=2-5]
\arrow[from=6-4, to=5-5]
\arrow["\Bigg{\lrcorner}"{anchor=center, pos=0.05}, draw=none, color=black!40, from=1-3, to=4-6]
\end{tikzcd}\]
}
\begin{document}

\author{Chiara Sarti
\institute{Computer Laboratory\\ University of Cambridge}
\email{cs2197@cam.ac.uk}
\and
Jamie Vicary
\institute{Computer Laboratory\\ University of Cambridge}
\email{jamie.vicary@cl.cam.ac.uk}
}

\maketitle

\begin{abstract}
We now have a wide range of proof assistants available for compositional reasoning in monoidal or higher categories which are free on some generating signature. However, none of these allow us to represent categorical operations such as products, equalizers, and similar logical techniques. Here we show how the foundational mathematical formalism of one such proof assistant can be generalized, replacing the conventional notion of string diagram as a geometrical entity living inside an $n$-cube with a posetal variant that allows exotic branching structure. We show that these generalized diagrams have richer behaviour with respect to categorical limits, and give an algorithm for computing limits in this setting, with a view towards future application in proof assistants.
\end{abstract}

\section{Introduction}

The development of proof assistants for category theory and higher category theory has recently been a active area for the applied category theory community, in particular from a string diagrammatic perspective. Recent work has included the \textsc{Cartographer} tool of Sobocinski, Wilson and Zanasi applying hypergraph rewriting for symmetric monoidal diagrams~\cite{cartographer}; the \textsc{DisCoPy} python library from de Felice et al for string diagram manipulation~\cite{discopy}; the \textsc{Rewalt} tool by Hadzihasanovic and Kessler for rewriting with diagrammatic sets~\cite{rewalt}; the \textsc{wiggle.py} tool due to Burton for 3d string diagram rendering~\cite{wigglepy}; the \textsc{Quantomatic} system developed by Dixon, Duncan, Kissinger and others for applying the ZX calculus in quantum information~\cite{quantomatic}; and the \textsc{Globular}~\cite{barglobular2018, barvicary2017} and \textsc{homotopy.io}~\cite{reutterhighlevel2019, heidemann2022} web-based systems for finitely-generated semistrict $n$-categories rendered as higher string diagrams. While these tools represent a wide range of perspectives and use-cases, they share a common goal of allowing the user to manipulate terms in a monoidal or higher categorical structure which is freely generated under composition from some signature, perhaps with additional algebraic elements (for example, such as Frobenius algebras, in the case of \textsc{Quantomatic}.)

The geometrical essence of these proof assistants allows the user to avoid some of the bureaucracy associated with some algebraic approaches to higher categories. However, much of the power of category theory arises from methods that go beyond direct composition, such as products, equalizers, colimits, and other standard categorical structures. These cannot be represented with any of the current family of string diagrammatic proof assistants.

Here we explore an alternative foundation for such proof assistants which may suggest a path towards new classes of tools, with the potential to combine the clarity and usability of string diagrammatic techniques, with the power of algebraic categorical methods. We illustrate our approach with the \textit{zigzag construction}, a simple combinatorial model of higher string diagram that forms the basis of the proof assistant \homotopyio. This construction depends on a contravariant equivalence between the augmented simplex category $\Delta_+$, of finite total orders and monotone functions; and the category $\Delta_=$ of non-empty finite total orders, with monotone functions preserving min and max elements. Elegantly described by Wraith~\cite{wraithusing1993}, the equivalence is representable, acting as $\Delta_+(-,\{ \bot \to \top \} ) : \Delta_+^\Op \to \Delta_=$.

\begin{figure}[t!]
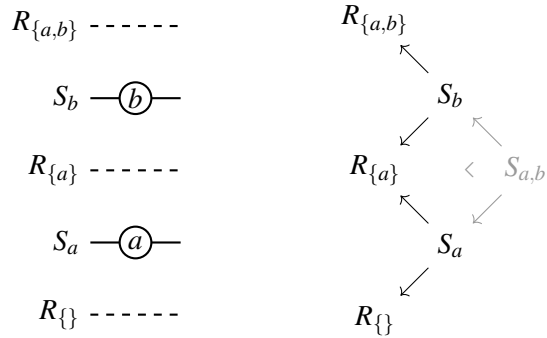

$$
\begin{aligned}
\figintrolinearzigzaglevels
\end{aligned}
\hspace{2cm}
\begin{aligned}
\figintrolinearzigzagzintervals
\end{aligned}
$$
\caption{The linear zigzag over the total order \mbox{$\{a \to b\}$.}}
\label{fig:introlinearzigzag}
\end{figure}
The idea of the zigzag construction is sketched in Figure~\ref{fig:introlinearzigzag}, for the 2-element total order \mbox{$\{a \to b\} \in \Delta_+$}.
On the left, the two elements of $\{a \to b\}$ appear as \textit{singular heights} $S_a$, $S_b$. These are interlaced vertically with the three elements of its dual $\Delta_+(\{a \to b\}, \{\bot \to \top\})$ written as \textit{regular heights} $R_{\{\}}$, $R_{\{a\}}$, $R_{\{a,b\}}$, with the subscript indicating the preimage of $\top$. On the right, we equip the singular heights with maps into the adjacent regular heights.\footnote{These maps are going in the opposite direction than in some previous literature, a technical choice we made following \cite{heidemannframed2022} which makes our development here more straightforward.} We see an alternating pattern of regular and singular heights, drawn as dashed and solid lines; reading from bottom to top, we can consider these as ``time slices'' of the linear geometry of the diagram, which tell us that $a$ happens first, and then $b$. For every category \cat C we have a \textit{zigzag category} $\constr{Z}(\cat C)$, and an $n$\-dimensional string diagram is simply an object of $\constr{Z}^n(\cat C)$ over some suitable base category. For every zigzag category, a functor $R: \constr{Z}(\cat C) \to \Delta_=$ projects down to the total order of regular heights. Concerning the existence of limits in $\constr{Z}(\cat C)$, a decision procedure has recently been presented~\cite{reutterhighlevel2019}, which supplies the following necessary condition: for a diagram in $\constr{Z}(\cat C)$ to have a limit, its projection in $\Delta_=$ must have a limit. This rules out wide classes of limits in $\constr{Z}(\cat C)$, since the limit structure of $\Delta_=$ is meagre. In particular, products do not exist in $\Delta_=$ for sets of cardinality greater than 1, so there can be no notion of product for nontrivial higher string diagrams.

However, the duality $\Delta_+^\Op \simeq \Delta _=$ extends further. In particular, Wraith's original paper further exhibited a duality $\FinPos^\Op \simeq \FinDLat$, where $\FinPos$ is the category of finite posets, and $\FinDLat$ is the category of finite distributive lattices and meet- and join- preserving functors; this can be considered a finitised version of Stone Duality, and is sometimes known as the Birkhoff Representation Theorem~\cite{birkhoffrings1937, wraithusing1993}. This equivalence is again representable as \mbox{$\FinPos(-,\{\bot \to \top\})$}, and extends the duality considered above, given the obvious full and faithful embeddings $\Delta_+ \hookrightarrow \FinPos$ and $\Delta _= \hookrightarrow \FinDLat$.

Here we exploit this duality to put forward a \emph{posetal zigzag construction}, directly generalizing the existing linear zigzag construction. While the linear zigzag construction yields a notion of higher category which is purely compositional, we conjecture that the posetal zigzag construction will yield a notion of higher category with richer categorical structure, but retaining a geometrical essence which could in principle be implemented in a similar tool to \homotopyio. Establishing this conjecture will require considerable future work.

In this paper we take the first steps, giving the first detailed investigation of posetal zigzags. We begin here with an informal illustration of posetal zigzags, generalizing the linear example of Figure~\ref{fig:introlinearzigzag} above. In Figure~\ref{fig:introposetalzigzag} we illustrate the posetal zigzag construction for the poset $\{x\ |\ y\}$, with two disconnected objects. While the linear zigzags in Figure~\ref{fig:introlinearzigzag} had a linear sequence of time slices (since $a$ must precede $b$), here a richer structure appears, with $x$ and $y$ now interpreted as ``events'' that can occur in either order. Now we have a dual distributive lattice $D=\FinPos(\{ x \,\,\,\,y\}, \{\bot \to \top\})$ with elements $\{\}$, $\{x\}$, $\{y\}$, $\{x,y\}$ under the subset order, once again denoting in braces the preimage of $\top$. Our regular and singular levels are indexed by pairs of related elements $[A,B]$ in $D$, which we call \emph{intervals}; by construction, $A,B$ will be subsets of the original poset, with $A \subseteq B$. For this example there are 9 such intervals, and we list them on the left of Figure~\ref{fig:introposetalzigzag}, once again considering them as distinct ``time slices''. For example, $[\{x\}, \{x,y\}]$ represents the time slice in which $x$ has already occurred, and $y$ is in the moment of occurring. More generally, one can interpret any interval $[A,B]$ as the time slice in which the events $A$ has already occurred, and the events $B \setminus A$ are occurring at that exact moment.
\begin{figure}[t!]
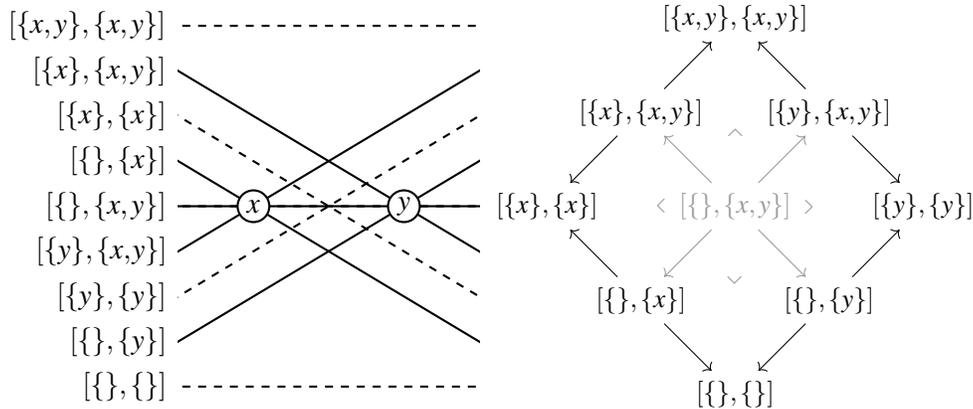

$$
\begin{aligned}
\figintroposetalzigzaglevels
\end{aligned}
\hspace{0.1cm}
\begin{aligned}
\figintroposetalzigzagintervals
\end{aligned}
$$
\caption{The posetal zigzag over the poset $\{ x\ | \ y \}$.}
\label{fig:introposetalzigzag}
\end{figure}

On the right of Figure~\ref{fig:introposetalzigzag}, we present these 9 intervals in a different way, as nodes on a diamond grid, interpreted as a diagram in an underlying process category \cat C. Morphisms arise as interval refinements, and we observe the presence of 4 squares, which are required to commute. As we move from the bottom to the top node by various paths, we observe different sequences of time slices: moving clockwise we observe the sequence $[\{\},\{\}]$, $[\{\},\{x\}]$, $[\{x\},\{x\}]$, $[\{x\},\{x,y\}]$, $[\{x,y\},\{x,y\}]$, interpreted as $x$ occurring before $y$; moving anticlockwise we observe the sequence $[\{\},\{\}]$, $[\{\}, \{y\}]$, $[\{y\},\{y\}]$, $[\{y\},\{x,y\}]$, $[\{x,y\}, \{x,y\}]$, interpreted as $y$ occurring before $x$; and moving vertically one observes the sequence $[\{\},\{\}]$, $[\{\}, \{x, y\}]$, $[\{x,y\}, \{x,y\}]$, in which $x$ and $y$ occur simultaneously. The posetal zigzag diagram provides semantic data in the base category \cat C for all of these possibilities. The fact that the 4 squares commute means that whatever sequence one chooses, the semantic effect in \cat C is the same. The fact that the squares are pullbacks means that much of this data is in fact redundant; the central part, drawn in gray, can be considered as ``filler data'' that need not be explicitly stored. Figure~\ref{fig:introlinearzigzag} contained similar redundant filler data, with the singular level $S_{a,b}$ required to arise as a pullback of $S_a$ and $S_b$.

This account is of course informal, and intended to give intuition ahead of the precise mathematical development that follows. In Section~\ref{sec:intconstr} we introduce the notion of interval for a poset, and give a notion of labelled diagram structure, with label data assigned to all intervals, but with no pullback conditions imposed on filler data. In Section~\ref{sec:posdiag} we introduce the more refined posetal zigzag construction $\constr P (\cat C)$, where the filler data is now obtained canonically via pullback, yielding a well-behaved conservative extension of the traditional linear zigzag formalism. In Section~\ref{sec:limofdiags} we consider the construction of limits in posetal zigzag categories, giving an explicit construction procedure, and establishing our main result, Corollary~\ref{cor:mainresult}, which states that if $\cat C$ has all finite limits, then so does $\constr P(\cat C)$.

In this way, we establish the posetal zigzag construction as a potential foundation for a new class of geometrical proof assistant, with additional expressive power. In this setting, higher string diagrams would retain their semistrict geometrical essence, yet also exhibit new posetal features such as branches, sinks and forks; $n$-dimensional diagrams are no longer inscribed neatly within the $n$-cube, as with traditional higher string diagrams. Depending on the poset structure, the sequencing of morphisms within the string diagram would be dynamic, with re-sequencing steps appearing as higher cells. While the future applications of these ideas of course remain speculative, we hope our work may lead to further development of geometrical proof assistants with exciting new capabilities.

\subsection*{Acknowledgements.}
We thank Lukas Heidemann, Alex Rice and Calin Tataru for many insightful discussions.
We also thank the anonymous reviewers for their invaluable feedback.
The first author acknowledges funding from King's College Cambridge.
The second author acknowledges funding from the Royal Society.

\section{The Interval Construction}
\label{sec:intconstr}

\subsection{From Posets to Labelled Intervals}
\label{subsec:labelledint}

Our formal development begins with a reconstruction of the categorical origins of the zigzag construction, which we generalise to poset shapes.
We will initially focus our analysis on the combinatorial aspects of our theory, setting solid foundations for the establishing the theoretical results of Section~\ref{sec:limofdiags}, and allowing for the geometric content of our theory to emerge organically as a by-product of our categorical analysis.

In our discussion, we study the maps of our diagrams as factored or decomposed in two parts: one which may change the posetal shape but not the labels, and another which may change the labels, but fixes the shape.
By adding the possibility of extra ``filler data'' in our diagrams, this relabelling map can be specified as a natural transformation between two functors.
This sort of decomposition can be expressed naturally in the language of Grothendieck fibrations, unlocking powerful theoretical tools.
Without getting too ahead of our formal development, this Section will build up to a formal description of the construction we depicted in Figure~\ref{fig:introlinearzigzag} and Figure~\ref{fig:introposetalzigzag},
starting from the following key idea:
\begin{ndef}[Interval]
\label{defn:interval}
	An \emph{interval} in a poset $P$ is a pair $[a,a']$ of $a,a' \in P$ with $a \le a'$.
	We denote the set of intervals in $P$ by $[P]$ and order it by the precision relation $\supseteq$
    \begin{align*}
        [a,a'] \supseteq [b,b'] :\longleftrightarrow (a \le b) \land (b' \le a')
	\end{align*}
	The strict counterpart of this relation is denoted $\supset$.
\end{ndef}

\begin{nprop}[]
	For every finite poset $P$, $[P]$ is also a finite poset.
\end{nprop}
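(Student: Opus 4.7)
The plan is to unpack the three defining axioms of a poset (reflexivity, transitivity, antisymmetry) for the relation $\supseteq$ on $[P]$, and reduce each to the corresponding axiom of the original partial order $\le$ on $P$; finiteness will come essentially for free.

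First I would observe that $[P]$ is a subset of $P \times P$ (specifically, the set of pairs $(a,a')$ with $a \le a'$), and so is immediately finite whenever $P$ is finite. This disposes of the cardinality condition.

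Next, I would verify each axiom of the precision order in turn. For reflexivity, $[a,a'] \supseteq [a,a']$ unfolds to $a \le a$ and $a' \le a'$, both of which hold by reflexivity of $\le$. For transitivity, assuming $[a,a'] \supseteq [b,b']$ and $[b,b'] \supseteq [c,c']$, unpacking gives $a \le b \le c$ and $c' \le b' \le a'$, whence $a \le c$ and $c' \le a'$ by transitivity of $\le$, so $[a,a'] \supseteq [c,c']$. For antisymmetry, $[a,a'] \supseteq [b,b']$ and $[b,b'] \supseteq [a,a']$ give $a \le b$, $b \le a$, $a' \le b'$, $b' \le a'$; applying antisymmetry of $\le$ twice yields $a = b$ and $a' = b'$, hence the two intervals coincide.

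The proof is entirely routine, with no genuine obstacle: each property of $\supseteq$ is a direct consequence of the same property for $\le$ applied componentwise. The only thing worth flagging is a small notational subtlety, namely that the symbol $\supseteq$ is suggestive but the relation is \emph{not} literally subset inclusion of subsets of $P$; it is a formal definition given in Definition~\ref{defn:interval} which mirrors the idea that a larger interval is a less precise approximation. Making this explicit at the start of the proof avoids any possible confusion when chasing the inequalities.
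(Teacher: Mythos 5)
Your proof is correct and follows essentially the same route as the paper's: finiteness is immediate, and each poset axiom for $\supseteq$ is verified componentwise from the corresponding axiom for $\le$ on $P$. Your explicit remarks on finiteness via $[P] \subseteq P \times P$ and on the notational caveat about $\supseteq$ are slightly more detailed than the paper's treatment, but the argument is the same.
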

\begin{proof}
    Finiteness is evident.
    We must show that $[P]$ inherits transitivity, reflexivity and anti-symmetry from $P$.
    For transitivity, if we have $[a,a'] \supseteq [b,b']$ and $[b,b'] \supseteq [c,c']$,
    then we must have $a \le b \le c$ and $c' \le b' \le a'$,
    so by transitivity of $P$, we have $a \le c$ and $c' \le a'$, and hence $[a,a'] \supseteq [c,c']$.

    Reflexivity is evident, and for anti-symmetry, if we have $[a,a'] \supseteq [b,b']$
    and $[b,b'] \supseteq [a,a']$, then we must have $a \le b \le a$ and $b' \le a' \le b'$, and hence $a = b$ and $a' = b'$ by anti-symmetry of $P$.
\end{proof}

\begin{nexample}[]
\label{eg:diagreprcond}
	The interval construction $[P]$ on the poset $P = \{a< (b\ |\ c) < d < e\}$ is depicted as the left-most diagram below.
    The data of the intervals below the gray arrows is easily rendered as the posetal string diagram on the right.
    This requires us to ignore the witnesses from the higher intervals, which have no clear interpretation on the diagrammatic side.
\[\begin{tabular}{c c}
\begin{tabular}{@{}c@{}}
\figintervaldiag
\end{tabular}
&
\begin{tabular}{@{}c@{}}
\figintervaldiagstringrepr
\end{tabular}
\end{tabular}\]
\end{nexample}

With anti-symmetry in mind, we specialise our terminology as follows:
\begin{ndef}[Degenerate Intervals]
	We refer to an interval $[a,a']$ in $P$ as a \emph{degenerate}, if $a=a'$, or \emph{non-degenerate}, if $a < a'$.
\end{ndef}

In the account of this construction offered in \cite{reutterhighlevel2019}, degenerate intervals are called regular heights, and non-degenerate intervals are called singular heights.
Though the alternative choice of terminology is supported by good motivation,
we avoid it in our discussion to spare the reader from unnecessary confusion.

\begin{ndef}[Map of Intervals]
    Let $f: P \to Q$ be a monotone function between posets.
    The associated \emph{map of interval posets} $[f]: [P] \to [Q]$ is the function sending an interval $[a,a']$ in $P$ to $[f(a),f(a')]$.
\end{ndef}

\begin{nprop}[]
	Let $f: P \to Q$ be a monotone function between posets.
    Then $[f]$ is a well-defined monotone map, and moreover, $[-]$ defines an endofunctor on $\FinPos$.
\end{nprop}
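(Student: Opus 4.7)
The statement breaks into three verifications: (i) that for any monotone $f : P \to Q$ the assignment $[a,a'] \mapsto [f(a), f(a')]$ lands in $[Q]$, i.e.\ yields an interval; (ii) that this assignment is monotone with respect to the precision order $\supseteq$; and (iii) that the operation $[-]$ respects identities and composition of monotone maps. The plan is to dispatch each of these in turn using only the definitions of the interval poset and monotonicity.

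For well-definedness, given an interval $[a,a']$ in $P$ we have $a \le a'$ by definition, and monotonicity of $f$ gives $f(a) \le f(a')$, so $[f(a), f(a')]$ is a bona fide interval in $Q$. For monotonicity of $[f]$, suppose $[a,a'] \supseteq [b,b']$ in $[P]$, which by Definition~\ref{defn:interval} unpacks as $a \le b$ and $b' \le a'$. Applying $f$ and using monotonicity once more gives $f(a) \le f(b)$ and $f(b') \le f(a')$, which is precisely the statement $[f(a), f(a')] \supseteq [f(b), f(b')]$ in $[Q]$.

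For functoriality, both axioms follow by pointwise computation on intervals. On the identity, $[\id_P][a,a'] = [\id_P(a), \id_P(a')] = [a,a']$, so $[\id_P] = \id_{[P]}$. For composition, given monotone $f : P \to Q$ and $g : Q \to R$, we compute
\[
    [g \circ f][a,a'] = [g(f(a)), g(f(a'))] = [g][f(a), f(a')] = ([g] \circ [f])[a,a'],
\]
so $[g \circ f] = [g] \circ [f]$.

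The whole argument is routine manipulation of the definitions, so there is no genuine obstacle; the only thing to be careful about is to keep track of the reversal of direction built into the precision order $\supseteq$ (inner coordinates grow, outer coordinates shrink), but since $f$ is applied uniformly to all entries this reversal is preserved automatically.
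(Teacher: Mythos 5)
Your proof is correct and follows essentially the same route as the paper: check that $[f]$ sends intervals to intervals via monotonicity of $f$, unfold the precision order to verify monotonicity of $[f]$, and observe that identities and composites are respected pointwise. The only difference is that you spell out the functoriality computation explicitly where the paper simply asserts it, which is a harmless elaboration.
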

\begin{proof}
	Let $f: P \to Q$ be a monotone map.
    Then $a \le a'$ implies $f(a) \le f(a')$, hence if $[a,a']$ is an interval in $P$, $[f][a,a']$ is an interval in $Q$.
	Moreover, if we have $[a,a'] \supseteq [b,b']$, i.e. $a \le b$ and $b' \le a'$,
    then $f(a) \le f(b)$ and $f(b') \le f(a')$, and thus $[f][a,a'] \supseteq [f][b,b']$, which shows $[f]$ is monotone.
    Finally, the assignment $P \mapsto [P]$ and $f \mapsto [f]$ respects identities and composites, and thus makes $[-]$ into an endofunctor $\FinPos$.
\end{proof}

\begin{nlemma}[]
	\label{lemma:prodintintprod}
	The functor $[-]$ preserves products.
\end{nlemma}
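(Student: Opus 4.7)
The plan is to exhibit the canonical comparison map $\varphi_{P,Q} \colon [P \times Q] \to [P] \times [Q]$, induced by the universal property of the product $[P] \times [Q]$ via $([\pi_P], [\pi_Q])$, and show directly that it is a poset isomorphism. Concretely, $\varphi_{P,Q}$ sends an interval $[(a,b),(a',b')]$ of $P \times Q$ to the pair $([a,a'],[b,b'])$ in $[P] \times [Q]$, which is well-defined because the product order on $P \times Q$ is componentwise, so $(a,b) \le (a',b')$ iff $a \le a'$ and $b \le b'$.

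The first step is to check that $\varphi_{P,Q}$ is a bijection on underlying sets. Surjectivity is immediate: given $([a,a'],[b,b']) \in [P]\times[Q]$, the pair $[(a,b),(a',b')]$ is an interval of $P \times Q$ by componentwise comparison, and it maps to the chosen pair. Injectivity is equally immediate by unpacking the two coordinates. Next I would verify that $\varphi_{P,Q}$ is an order-isomorphism by tracing definitions: we have $[(a,b),(a',b')] \supseteq [(c,d),(c',d')]$ iff $(a,b) \le (c,d)$ and $(c',d') \le (a',b')$, which by componentwise comparison is equivalent to the conjunction of $a \le c, c' \le a', b \le d, d' \le b'$, which is precisely the statement that $[a,a'] \supseteq [c,c']$ and $[b,b'] \supseteq [d,d']$, i.e. that the pair is ordered in $[P] \times [Q]$. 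This shows simultaneously that $\varphi_{P,Q}$ is monotone and reflects the order, so it is an iso in $\FinPos$.

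Naturality of $\varphi$ is then automatic, since by construction the components are the images under $[-]$ of the product projections: by definition $[\pi_P][(a,b),(a',b')] = [a,a']$ and similarly for $[\pi_Q]$, and these are exactly the two projections of $[P]\times[Q]$ precomposed with $\varphi_{P,Q}$. Hence the cone $([\pi_P],[\pi_Q]) \colon [P \times Q] \to [P],[Q]$ is a product cone, so $[-]$ preserves binary products. Finally, the terminal object of $\FinPos$ is the one-element poset $\{*\}$, and $[\{*\}] = \{[*,*]\}$ is again a one-element poset, so $[-]$ also preserves the terminal object. Preservation of arbitrary finite products follows by induction.

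There is no real obstacle here beyond careful bookkeeping: the whole argument boils down to the observation that the componentwise order on $P \times Q$ makes intervals of $P \times Q$ agree with pairs of intervals, and that the precision order $\supseteq$ on intervals is itself defined componentwise in $a$ and $a'$, so it interacts cleanly with products of posets.
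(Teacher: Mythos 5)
Your proof is correct and follows essentially the same route as the paper's: both arguments identify intervals of $P \times Q$ with pairs of intervals via the componentwise order and check that the precision relation $\supseteq$ matches on both sides. You are in fact slightly more thorough than the paper, which omits the explicit verification that the isomorphism is the canonical comparison map and does not treat the terminal object, but these additions only strengthen the argument.
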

\begin{proof}
Let $P$ and $Q$ be finite posets.
An interval in $P \times Q$ is a pair of pairs $[(a,b),(a',b')]$
with $a \le a'$ and $b \le b'$, and thus defines two intervals $[a,a']$ and $[b,b']$.
Moreover, the assignment and its inverse are monotone:
\begin{align*}
	[(a,b),(a',b')] \supseteq [(c,d),(c',d')] &\longleftrightarrow (a,b) \le (c,d) \land (c',d') \le (a',b') \\
    &\longleftrightarrow a \le c \land b \le d \land c' \le a' \land d' \le b' \\
    &\longleftrightarrow [a,a'] \supseteq [c,c'] \land [b,b'] \supseteq [d,d']
.\end{align*}
\end{proof}

We now identify $\FinPos$ with a corresponding full subcategory of $\Cat$,
by identifying each poset $P$ with the category $P$ with the elements of $P$ for objects,
and arrows $a \to a'$, whenever $a \le a'$.
Note that under this identification, the interval construction is isomorphic to the twisted arrow construction \cite[\S2]{johnstonenote1999}.
The above data is combined as follows:
\begin{ndef}[Labelled Interval Category]
	Let $\cat{C}$ be a category. The category $\constr{L}(\cat{C})$ of \emph{intervals labelled in} $\cat{C}$ is defined as the Grothendieck construction of the functor
    \begin{align*}
        \func{L}_{\cat{C}}: \FinPos^\Op &\longrightarrow \Cat \\
        P &\longmapsto \Func([P],\cat{C}), \\
        f &\longmapsto - \circ [f]
    .\end{align*}
\end{ndef}

Explicitly, the objects of $\constr{L}(\cat{C})$ are pairs $(P,\func{X})$,
with $P \in \FinPos$, a shape poset, and $\func{X}: [P] \to \cat{C}$, a labelling of $[P]$ in $\cat{C}$.
As for the morphisms, they are given by pairs $(f,\alpha)$, with $f: P \to Q$, a change of shape map, and $\alpha: \func{X} \to \func{Y} \circ [f]$ a relabelling natural transformation.

\begin{nexample}[]
    \label{eg:labelledintmain}
    Let $P$ be the poset $\{a < (b\,|\,c) < d < e\}$ of Example \ref{eg:diagreprcond} and $\cat{C}$ be the thin category generated by $\{x < (f\,|\,g\,|\,h) < (\alpha\,|\,\beta\,|\,\gamma) < \mu\}$.
    Then the pair $(P,\func{X})$ defines an object of $\constr{L}(\cat{C})$, where $\func{X}: [P] \to \cat{C}$ is the functor represented in the diagram below:
\[\begin{tikzcd}[ampersand replacement=\&, row sep=12pt]
        \& g \& x \& f \& \beta \\
        x \&\& \alpha \& \textcolor{black!40}{\mu} \& x \& h \& x \\
        \& f \& x \& g \& \gamma
        \arrow[from=1-4, to=1-3]
        \arrow[from=1-4, to=2-5]
        \arrow[from=1-2, to=1-3]
        \arrow[from=3-4, to=2-5]
        \arrow[from=3-4, to=3-3]
        \arrow[from=3-2, to=3-3]
        \arrow[from=2-3, to=3-4]
        \arrow[from=2-3, to=1-4]
        \arrow[from=2-3, to=1-2]
        \arrow[from=2-3, to=3-2]
        \arrow[from=2-6, to=2-7]
        \arrow[from=3-5, to=2-6]
        \arrow[from=2-6, to=2-5]
        \arrow[from=3-5, to=3-4]
        \arrow[from=1-5, to=1-4]
        \arrow[from=1-5, to=2-6]
        \arrow[color=black!40, from=2-4, to=3-5]
        \arrow[color=black!40, from=2-4, to=1-5]
        \arrow[color=black!40, from=2-4, to=2-3]
        \arrow[from=3-2, to=2-1]
        \arrow[from=1-2, to=2-1]
\end{tikzcd}\]
\end{nexample}
\begin{nexample}[]
\label{eg:labelledintdisconnected}
    As shapes for cells in a higher category, labelled intervals can admit extremely undesirable behaviour: they need not even be connected.
    For instance, if $P := 1+1$ is the discrete two-element poset,
    then $[P] \cong P$ and thus a labelled interval of shape $P$ simply picks out two objects of $\cat{C}$.
\end{nexample}

\begin{nexample}[]
\label{eg:labelledintmap}
    A map of labelled intervals can be decomposed as a change of shape monotone function and a relabelling natural transformation.
	In the diagram below, the monotone map is between the posets $P := \{\bot \to \top\}$ and $Q := \mbox{\{a < (b\ |\ c) < d\}}$, and acts by $\bot \mapsto a, \top \mapsto d$.
    The relabelling has components $\id_x$, $f \to g$ and $\id_y$.
\[\begin{tikzcd}[ampersand replacement=\&]
    \& g \& y \& y \\
    x \&\&\& \textcolor{black!40}{f} \& y \\
    \& x \& x \& g \\
    \& x \& f \& y
    \arrow[from=3-4, to=2-5]
    \arrow[from=3-4, to=3-3]
    \arrow[from=3-2, to=3-3]
    \arrow[from=3-2, to=2-1]
    \arrow[from=1-2, to=2-1]
    \arrow[from=1-2, to=1-3]
    \arrow[from=1-4, to=2-5]
    \arrow[shorten >=3pt, from=1-4, to=1-3]
    \arrow[color=black!40, from=2-4, to=3-4]
    \arrow[color=black!40, from=2-4, to=1-2]
    \arrow[color=black!40, from=2-4, to=3-2]
    \arrow[color=black!40, from=2-4, to=1-4]
    \arrow[from=4-4, to=2-5]
    \arrow[from=4-2, to=2-1]
    \arrow[draw, from=4-3, to=2-4]
    \arrow[from=4-3, to=4-4]
    \arrow[from=4-3, to=4-2]
\end{tikzcd}\]
\end{nexample}

By virtue of its definition through a Grothendieck construction, we have an underlying shape functor $U: \constr{L}(\cat{C}) \to \FinPos$, which is a Grothendieck fibration.
If we adopt the convention of writing $(x^0, x^1)$ for the components of an object $x \in \constr{L}(\cat{C})$, and similarly for morphisms, then the action of $U$ is simply specified by taking first projections $x \mapsto x^0, f \mapsto f^0$.

Given a functor $\func{F}:\cat{C} \to \cat{D}$, we may define a post-composition with $\func{F}$ mapping $\constr{L}(\func{F}): \constr{L}(\cat{C}) \to \constr{L}(\cat{D})$, which acts by $(P,\func{X}) \mapsto (P,\func{F}\circ \func{X})$ and $(f,\alpha) \mapsto (f,\func{F}\alpha)$.
Then $\constr{L}(\func{F})$ is a functor, and moreover it allows us to make an assignment $\constr{L}(-): \cat{C} \mapsto \constr{L}(\cat{C}),\func{F} \mapsto \constr{L}(\func{F})$, which is an endofunctor on $\Cat$, with the usual caveats about size concerns we trust our readers to judge unproblematic.
In particular, this allows us to iterate the construction, as $\constr{L}(\cat{C})$ can itself be taken as a category of labels, and we may thus define iterated versions of this construction by setting $\constr{L}^{n+1}(\cat{C}) := \constr{L}(\constr{L}^n(\cat{C}))$.

It seems useful at this point, before drawing this Section to an end, to briefly summarise our progress towards the overarching goal of posetal diagrams.
The interval construction introduced in Definition \ref{defn:interval} captures the combinatorial aspect of our intended construction remarkably well.
Unfortunately, as we have seen in Example \ref{eg:diagreprcond}, the interval construction introduces labels which are foreign to the diagrammatic calculus.
In keeping with our aim of being able to reconstruct the combinatorial object from the geometric representation, we need to ensure that those labels do not carry information that cannot be inferred from the explicit datum of a diagram.
We will see in the next Section that this canonicity requirement can be succinctly stated in terms of limit constructions.

\section{Posetal Diagrams}
\label{sec:posdiag}

\subsection{Posetal Diagrams as Local Functors}
\label{subsec:posdiaglocal}

In the previous Section we have presented the construction of the category of labelled intervals, arguing that it correctly captures the desiderata combinatorics our theory.
However, this approach requires too much filler data to be specified, preventing a clean diagrammatic presentation of our structures.
In drawing intuition from Example \ref{eg:diagreprcond}, we wish to find technical conditions under which the missing filler data in the diagrams can be faithfully reconstructed.

A close inspection of Example \ref{eg:diagreprcond} and Example \ref{eg:labelledintmain} shows that all the intervals whose data we wish to suppress satisfy the universal property of being a pullback in $[P]$.
This would suggest we consider labellings $\func{X}: [P] \to \cat{C}$ which preserve pullbacks.
However, this condition is far too strong for our interest: $[P]$ is a thin category, and thus every arrow is monic. If $\func{X}$ were to preserve all pullbacks, it could only involve monic arrows in $\cat{C}$, which is exceedingly restrictive, especially in light of our wishes to iterate the construction.
We will thus exercise some care in determining exactly which pullbacks in $[P]$ should be preserved:
\begin{ndef}[Atomic Cospan, Local Functor]
    Let $\cat{J}$, $\cat{C}$ be categories.
    A cospan $(a \stackrel{f}{\to} x \stackrel{g}{\leftarrow} b)$ in $\cat{J}$ is \emph{atomic} if for any cospan $(a \stackrel{f'}{\to} y  \stackrel{g'}{\leftarrow} b)$
    and map $h: y \to x$ with $h \circ f = f'$ and $h \circ g = g'$, then $h$ is an isomorphism.
    We say a functor $\func{X}: \cat{J} \to \cat{C}$ is \emph{local} if it preserves all pullbacks of atomic cospans.
\end{ndef}

\begin{nexample}
    If $f$ is not an isomorphism, then $(a \stackrel{f}{\to} x \stackrel{f}{\leftarrow} a)$ is never atomic.
    Hence non-iso monic arrows need not be preserved by local functors.
\end{nexample}
\begin{nexample}
    The cospan $[b,e] \supset [e,e] \subset [c,e]$ in our Example \ref{eg:diagreprcond} is also not atomic.
\end{nexample}

\begin{nexample}[]
    The labelling of the diamond $\{a < (b\,|\,c) < d\}$ in $\Set$ depicted below is a local functor:
\[\begin{tikzcd}[ampersand replacement=\&, row sep=13pt]
        \&\& \textcolor{black!40}{1+1} \\
        1 \& {1+1} \&\& {1+1} \& 1 \\
        1 \& 1 \&\& 1 \& 1
        \arrow[from=2-4, to=3-5]
        \arrow[from=2-1, to=3-2]
        \arrow[from=2-1, to=3-1]
        \arrow[from=2-2, to=3-1]
        \arrow[from=2-2, to=3-4]
        \arrow[from=2-5, to=3-4]
        \arrow[from=2-5, to=3-5]
        \arrow[color=black!40, from=1-3, to=2-4]
        \arrow[color=black!40, from=1-3, to=2-1]
        \arrow[color=black!40, from=1-3, to=2-2]
        \arrow[color=black!40, from=1-3, to=2-5]
        \arrow[from=2-4, to=3-2]
\end{tikzcd}\]
\end{nexample}

Though this technical condition seems cumbersome to check explicitly, a remarkable fact about our formalism is that for a rather large class of poset shapes we almost never actually need to do this.
We will prove in the remainder of the paper that so long as the diagrams are constructed by taking finite limits of other local diagrams of the right shape, we will remain within this fragment of our framework.
For our interest in using this combinatorial framework as the basis of a future proof assistant, this shows we can maintain strong invariants on our data structures, so that any actual implementation of our procedures could drastically reduce the data it needs to explicitly keep track of.

Another, even more surprising property of our formalism is that this class of well-behaved poset shapes can be morally taken to be the whole of $\FinPos$, albeit viewed through a looking glass.
This is because we may take our well-behaved posets to be finite distributive lattices,
by which we mean posets $P$ admitting finite meets and joins
and that satisfying the condition that for all $a,b,c \in P$ we have
$a \land (b \lor c) = (a \lor b) \land (a \lor c)$ and $a \lor (b \land c) = (a \land b) \lor (a \land c)$.
Such posets assemble into a non-full subcategory $\FinDLat$ of $\FinPos$ by taking as maps  monotone functions $f: P \to Q$ preserving finite meets and joins.
Note that, in particular, our lattices are always bounded, and moreover if $f$ is a lattice homomorphism then we must have $f(\bot) = \bot$ and $f(\top) = \top$.
Although in our formal development some individual results would hold with weaker regularity conditions,
the category $\FinDLat$ enjoys the property of being equivalent to $\FinPos^\Op$ by the Birkhoff Representation Theorem~\cite[page 262]{wraithusing1993}.
The sum of these wonderful properties suggests us the following definition:
\begin{ndef}[Labelled Posetal Diagram]
    The category $\constr{P}(\cat{C})$ of \emph{posetal diagrams labelled in} $\cat{C}$ is defined to be the subcategory of labelled intervals $\constr{L}(\cat{C})$ with objects pairs $(P,\func{X})$ with $P$ being a distributive lattice and $\func{X}$ a local functor, and morphisms pairs $(f,\alpha)$, with $f$ a lattice homomorphism.
\end{ndef}

\subsection{Intervals in Lattices}
\label{subsec:intinlat}

Having formally introduced our key notion of posetal diagrams in Section \ref{subsec:posdiaglocal}, we will now embark a fine-grained analysis of the relation between logical properties of a poset $P$ and the locality property of the labelled interval with shape $P$.
Our key result for this Section will be an explicit characterisation of atomic cospans in $[P]$ for distributive lattices.
This will allow us to extract useful consequences about preservation of locality under proposition by interval maps associated with lattice homomorphisms.
To do this, however, we first require some intermediate lemmas.
\begin{nlemma}[]
    \label{lemma:intervalmeetsemilat}
    Let $P$ be a finite poset with binary meets and joins.
    Then, for any two intervals $[a,a']$ and $[b,b']$ in $[P]$, the meet $[a,a'] \land [b,b']$ exists and is given by $[a \land b, a' \lor b']$.
\end{nlemma}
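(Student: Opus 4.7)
The plan is to unfold the universal property of the meet in the poset $([P], \supseteq)$ and verify, by direct appeal to the universal properties of $\wedge$ and $\vee$ in $P$, that $[a \wedge b, a' \vee b']$ satisfies it. The proof breaks into three quick checks, and no recourse to distributivity is required, matching the hypothesis that $P$ merely has binary meets and joins.

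First I would check that $[a \wedge b, a' \vee b']$ is a well-defined element of $[P]$, i.e.\ that $a \wedge b \le a' \vee b'$; this follows from the chain $a \wedge b \le a \le a' \le a' \vee b'$ using only that $a \le a'$ by hypothesis and the universal properties of $\wedge$ and $\vee$.

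Second, I would verify that $[a \wedge b, a' \vee b'] \supseteq [a,a']$ and $[a \wedge b, a' \vee b'] \supseteq [b,b']$. Unfolding Definition~\ref{defn:interval}, these reduce respectively to the pair $a \wedge b \le a$, $a' \le a' \vee b'$, and the pair $a \wedge b \le b$, $b' \le a' \vee b'$, all of which are immediate from the universal properties of $\wedge$ and $\vee$ in $P$.

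Third, I would establish universality: any interval $[c,c']$ with $[c,c'] \supseteq [a,a']$ and $[c,c'] \supseteq [b,b']$ automatically satisfies $[c,c'] \supseteq [a \wedge b, a' \vee b']$. Indeed, the hypotheses give $c \le a$, $c \le b$, $a' \le c'$ and $b' \le c'$, from which $c \le a \wedge b$ and $a' \vee b' \le c'$ by the universal properties of $\wedge$ and $\vee$; these are exactly the required inequalities.

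There is no substantive obstacle here; the only subtlety worth flagging to the reader is orientation-related: since the order on $[P]$ is the precision order $\supseteq$, the \emph{meet} in $[P]$ is the smallest enclosing interval rather than an intersection, so one must keep the direction of $\supseteq$ straight when matching "greatest lower bound" against the endpoints.
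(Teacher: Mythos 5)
Your proof is correct and follows essentially the same route as the paper's: verify that $[a \wedge b, a' \vee b']$ is a well-defined interval, that it is $\supseteq$ both $[a,a']$ and $[b,b']$, and that any common refinement-bound $[c,c']$ satisfies $[c,c'] \supseteq [a\wedge b, a'\vee b']$ by the universal properties of $\wedge$ and $\vee$ in $P$. Your explicit remark about keeping the orientation of the precision order straight is a helpful addition but does not change the argument.
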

\begin{proof}
    We have $a \land b \le a$ and $a' \le a' \lor b'$, and similarly for $[b,b']$, hence $[a \land b, a' \lor b']$ is an interval and $[a \land b, a' \lor b'] \supseteq [a,a'],[b,b']$.
    Moreover, for any other interval $[c,c']$ in $P$ with $[c,c'] \supseteq [a,a'],[b,b']$, we must have $c \le a$ and $c \le b$, and thus $c \le a \land b$.
    The dual calculation shows $[c,c'] \supseteq [a \land b, a' \lor b']$.
\end{proof}

\begin{nlemma}[]
\label{lemma:intervaljoinexist}
    Let $P$ be a finite poset with binary meets and joins.
    Then, for any two intervals $[a,a']$ and $[b,b']$ in $[P]$, the join $[a,a'] \lor [b,b']$ exists iff $a \lor b \le  a' \land b'$, in which case it is given by $[a \lor b, a' \land b']$.
\end{nlemma}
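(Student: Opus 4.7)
The plan is to mirror the structure of the previous lemma, dualising the argument since joins in $[P]$ (ordered by $\supseteq$) are formally opposite to meets. The key is to carefully unpack what ``join'' means under the chosen order: an upper bound of $[a,a']$ and $[b,b']$ in $[P]$ is some interval $[c,c']$ such that $[a,a'] \supseteq [c,c']$ and $[b,b'] \supseteq [c,c']$, which by Definition~\ref{defn:interval} means $a \le c$, $b \le c$, $c' \le a'$ and $c' \le b'$.

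For the ``only if'' direction, I would begin by assuming that the join exists as some interval $[c,c']$. Since it is in particular an upper bound, the inequalities above hold; using the universal property of meets and joins in $P$, this forces $a \lor b \le c$ and $c' \le a' \land b'$. Since $c \le c'$ by virtue of $[c,c']$ being an interval, we obtain $a \lor b \le a' \land b'$ by transitivity, yielding the necessary condition.

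For the ``if'' direction, suppose $a \lor b \le a' \land b'$. Then $[a \lor b, a' \land b']$ is a well-defined interval, and I would verify it is the join in two steps. First, it is an upper bound: $[a,a'] \supseteq [a \lor b, a' \land b']$ follows from $a \le a \lor b$ and $a' \land b' \le a'$, and symmetrically for $[b,b']$. Second, it is least: given any other upper bound $[c,c']$, the inequalities $a,b \le c$ and $c' \le a',b'$ imply $a \lor b \le c$ and $c' \le a' \land b'$ by the universal properties in $P$, whence $[a \lor b, a' \land b'] \supseteq [c,c']$, as required.

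The argument is essentially routine and I do not anticipate any real obstacle; the main point to be careful about is consistently tracking the direction of the order $\supseteq$ on $[P]$, which inverts the nesting of intervals relative to the componentwise order on endpoints. This is the dual phenomenon to what appeared in Lemma~\ref{lemma:intervalmeetsemilat}, where meets in $[P]$ corresponded to the ``widest'' interval rather than the narrowest.
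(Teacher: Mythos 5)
Your proof is correct and follows essentially the same route as the paper's: derive the necessary condition $a \lor b \le a' \land b'$ from the interval constraint $c \le c'$ on any candidate join, and conversely verify the universal property of $[a \lor b, a' \land b']$ directly from the universal properties of $\lor$ and $\land$ in $P$. The only difference is presentational — you spell out the unpacking of $\supseteq$ for upper bounds more explicitly than the paper does.
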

\begin{proof}
	Assume $[a,a'] \lor [b,b']$ exists and equal to some interval $[c,c']$.
	Then in particular we have $a \le c$ and $b \le c$, and thus $a \lor b \le c$, and dually $c' \le a' \land b'$.
	Since $c \le c'$, we must have $a \lor b \le a' \land b'$.

	We establish the other direction by verifying the universal property of the interval $[a \lor b,a' \land b']$, whenever it exists.
	We have $[a,a'] \supseteq [a \lor b,a' \land b']$, and similarly for $[b,b']$,
	and moreover for every interval $[c,c']$ satisfying the containments $[a,a'] \supseteq [c,c'] \subseteq [b,b']$, we must have $[a \lor b,a' \land b'] \supseteq [c,c']$ by the above verifications.
\end{proof}

\begin{nprop}[]
\label{prop:latatomiciffjoin}
    Let $P$ be a finite lattice.
    Then a cospan $[a,a'] \supseteq [b,b'] \subseteq [c,c']$ in $[P]$ is atomic
	iff $a \lor c \le a' \land c'$ and $[b,b'] = [a \lor c, a' \land c']$.
\end{nprop}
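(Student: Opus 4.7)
The plan is to reduce the atomic condition to a direct order-theoretic statement in the thin category $[P]$, and then to invoke Lemma~\ref{lemma:intervaljoinexist} to identify the extremal apex.

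First I would unpack the atomic definition in this setting. Since $[P]$ is a thin category, specifying an intermediate cospan $[a,a'] \to [y,y'] \leftarrow [c,c']$ together with a factorising map $[y,y'] \to [b,b']$ amounts to nothing more than the chain of precision relations $[a,a'], [c,c'] \supseteq [y,y'] \supseteq [b,b']$. Atomicity of the cospan at apex $[b,b']$ thus reduces to the statement that no interval strictly containing $[b,b']$ in the precision order can sit inside both $[a,a']$ and $[c,c']$; in other words, $[b,b']$ is the greatest (with respect to $\supseteq$) interval contained in the set-theoretic intersection $[a,a'] \cap [c,c']$.

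For the forward direction, I assume atomicity. The very existence of the cospan gives $a, c \le b$ and $b' \le a', c'$, whence $a \lor c \le b \le b' \le a' \land c'$, so in particular $a \lor c \le a' \land c'$. By Lemma~\ref{lemma:intervaljoinexist} the interval $[a \lor c, a' \land c']$ therefore exists, and it sits as a legitimate intermediate apex, satisfying $[a,a'], [c,c'] \supseteq [a \lor c, a' \land c'] \supseteq [b,b']$. Atomicity then forces the last containment to be an equality, giving $[b,b'] = [a \lor c, a' \land c']$ as required.

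For the converse, assuming this equality, and given any intermediate $[y,y']$ with $[a,a'], [c,c'] \supseteq [y,y'] \supseteq [b,b']$, the order bounds $a \lor c \le y \le b = a \lor c$ and $a' \land c' = b' \le y' \le a' \land c'$ collapse immediately to $[y,y'] = [b,b']$, so the factorising morphism is the identity. The main subtlety here is really just keeping track of the reversed precision order when translating between the categorical picture of cospans and the order-theoretic picture of nested intervals; past this bookkeeping, everything follows automatically from Lemma~\ref{lemma:intervaljoinexist}, which supplies both the existence criterion $a \lor c \le a' \land c'$ and the explicit form of the extremal apex.
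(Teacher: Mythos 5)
Your proof is correct and follows essentially the same route as the paper's: the forward direction inserts $[a \lor c, a' \land c']$ as an intermediate apex (its existence secured by $a \lor c \le b \le b' \le a' \land c'$) and applies atomicity plus anti-symmetry, while the backward direction is just the universal property of $a \lor c$ and $a' \land c'$, which the paper states tersely and you spell out via Lemma~\ref{lemma:intervaljoinexist}. The only quibble is that your opening gloss---that atomicity says $[b,b']$ is the \emph{greatest} common refinement---is really the conclusion of the proposition rather than a restatement of atomicity, which a priori only yields maximality; since your actual argument never relies on this gloss, it is harmless.
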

\begin{proof}
    In the forward direction, we have that $a \le b$ and $c \le b$, hence $a \lor c \le b$ and dually $b' \le a' \land c'$.
    In particular, $[a \lor c, a' \land c'] \supseteq [b,b']$, and thus we have $[a \lor c, a' \land c'] = [b,b']$ by atomicity and anti-symmetry.
    The backward direction is given by the universal property of $a \lor c$ and $a' \land c'$.
\end{proof}

This yields an immediate but essential consequence for our forthcoming study of limits of posetal diagrams in Section \ref{subsec:limposdiagloc}:
\begin{ncor}[]
\label{cor:lathompreservesatomiccospans}
	If $f: P \to Q$ is a lattice homomorphism, $[f]$ preserves atomic cospans and their pullbacks.
\end{ncor}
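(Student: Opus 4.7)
The plan is to reduce everything to the characterisation of atomic cospans supplied by Proposition~\ref{prop:latatomiciffjoin} and the formulas for meets (and, where they exist, joins) of intervals given by Lemma~\ref{lemma:intervalmeetsemilat} and Lemma~\ref{lemma:intervaljoinexist}. The key observation is that a lattice homomorphism $f : P \to Q$ preserves both $\land$ and $\lor$, so the purely algebraic descriptions of atomic cospans and of their pullback vertices are transported verbatim along $[f]$.

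First, I would take an arbitrary atomic cospan $[a,a'] \supseteq [b,b'] \subseteq [c,c']$ in $[P]$. By Proposition~\ref{prop:latatomiciffjoin} this is equivalent to the two conditions $a \lor c \leq a' \land c'$ and $[b,b'] = [a \lor c,\, a' \land c']$. Applying $[f]$ gives the cospan $[f(a),f(a')] \supseteq [f(b),f(b')] \subseteq [f(c),f(c')]$ in $[Q]$. Since $f$ preserves joins and meets, $f(b) = f(a \lor c) = f(a) \lor f(c)$ and $f(b') = f(a' \land c') = f(a') \land f(c')$; monotonicity of $f$ then promotes $a \lor c \leq a' \land c'$ to $f(a) \lor f(c) \leq f(a') \land f(c')$. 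Hence the image cospan again satisfies the hypotheses of Proposition~\ref{prop:latatomiciffjoin} and is therefore atomic.

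For the pullback, I would use Lemma~\ref{lemma:intervalmeetsemilat}, which identifies the pullback of our atomic cospan in $[P]$ with the meet $[a,a'] \land [c,c'] = [a \land c,\, a' \lor c']$. Applying $[f]$ yields $[f(a \land c), f(a' \lor c')]$, which, since $f$ is a lattice homomorphism, equals $[f(a) \land f(c),\, f(a') \lor f(c')]$. By Lemma~\ref{lemma:intervalmeetsemilat} again, this is precisely the meet (i.e.\ the pullback in $[Q]$) of $[f(a),f(a')]$ and $[f(c),f(c')]$. Since in the thin category $[Q]$ a pullback is determined up to unique iso by its vertex, $[f]$ preserves the pullback.

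I do not foresee a serious obstacle: everything is bookkeeping on top of the characterisation already proved. The only point where I would be careful is the orientation of arrows, since the order on $[P]$ is the precision order $\supseteq$, so a pullback of the cospan of containments corresponds to the meet in $[P]$, not the join; once that is correctly tracked, preservation by $[f]$ follows immediately from $f$ preserving $\land$ and $\lor$.
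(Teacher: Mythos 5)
Your proposal is correct and follows essentially the same route as the paper: both reduce atomicity to the characterisation of Proposition~\ref{prop:latatomiciffjoin}, transport the formula $[b,b'] = [a \lor c, a' \land c']$ along $f$ using preservation of meets and joins, and identify the pullback with the meet $[a \land c, a' \lor c']$ of Lemma~\ref{lemma:intervalmeetsemilat}, which is likewise preserved. Your explicit check that $f(a) \lor f(c) \le f(a') \land f(c')$ is a small extra care the paper leaves implicit, but the argument is the same.
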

\begin{proof}
    By Proposition \ref{prop:latatomiciffjoin}, a cospan $[a,a'] \supseteq [b,b'] \subseteq [c,c']$ is atomic iff $[b,b'] = [a \lor c, a' \land c']$.
    But lattice homomorphisms preserve binary meets and joins, so $[f][b,b'] = [f(a) \lor f(c), f(a') \land f(c')]$,
    hence the cospan $[f][a,a'] \supseteq [f][b,b'] \subseteq [f][c,c']$ is atomic.
    Moreover, $[P]$ is a thin category, so the pullback of our atomic cospan coincides with the product of $[a,a']$ and $[c,c']$.
    By Lemma \ref{lemma:intervalmeetsemilat}, this is given by $[a \land c, a' \lor c']$, and thus it is preserved by $f$.
\end{proof}

\section{Limits of Posetal Diagrams}
\label{sec:limofdiags}

\subsection{Limit Procedure}
\label{subsec:limalg}

Having given the construction of the category $\constr{L}(\cat{C})$ in our preceding Section,
we will now describe a procedure to compute limits in $\constr{L}(\cat{C})$ from limits in $\cat{C}$ and $\FinPos$.
By taking the category of labels $\cat{C}$ to be a suitable $n$-fold iterate of the labelled interval construction $\constr{L}^n(\cat{D})$, this procedure provides a recursive strategy for computing limits entirely in terms of those in the base category $\cat{D}$.

Before we describe the limit procedure, since we are interested in stating our results for categories which may be fail to admit many limits, we need to fix some terminology.
Recall that for functors $\func{F}: \cat{J} \to \cat{C}$ and $\func{G}: \cat{C} \to \cat{D}$, we say that $\func{G}$ preserves $\func{F}$-limits if whenever $(L,\eta: \Delta_L \to \func{F})$ is a limit for $\func{F}$, $(\func{G}L,\func{G}\eta)$ is a limit for $\func{G} \circ \func{F}$.
We also say that $\func{G}$ reflects $\func{F}$-limits if whenever we have a cone $(L,\eta)$  over $\func{F}$ such that $(\func{G}L,\func{G}\eta)$ is a limit for $\func{G} \circ \func{F}$, then $(L,\eta)$ is a limit for $\func{F}$ \cite[3.3.1]{riehlcategory2016}.
\begin{ndef}[Pointwise Limit]
    Let $(L,\eta)$ be a limit for a diagram $\func{F}: \cat{J} \to \Func(\cat{C},\cat{D})$.
    We say $(L,\eta)$ is \emph{pointwise} if for each $c \in \cat{C}$, it is preserved by the evaluation at $c$ functor $\ev_c: \Func(\cat{C},\cat{D}) \to \cat{D}$.
\end{ndef}

If $\cat{D}$ has all $\cat{J}$-limits, every $\cat{J}$-limit in $\Func(\cat{C},\cat{D})$ is pointwise, but this need not be the case otherwise, and our analysis will necessitate the distinction.

\begin{nconstr}[Limit Procedure]
\label{constr:limit}
    Given a category of labels $\cat{C}$ and a finite diagram $\func{F}: \cat{J} \to \constr{L}(\cat{C})$,
    we compute the limit of $\func{F}$ or fail according to the following procedure:
    \begin{enumerate}
        \item We take the limit $(L,\rho)$ of the diagram $\func{U} \circ \func{F}: \cat{J} \to \constr{L}(\cat{C}) \to \FinPos$,
		\item We use $\rho$ to produce from $\func{F}$ a diagram $\func{G}: \cat{J} \to \Func([L],\cat{C})$,
		\item For each $j\in \cat{J}$, we take $\func{G}j := (\func{F}j)^1 \circ [\rho_j]$,
		\item For each $h \in \cat{J}(j,j')$, we take $\func{G}h: \func{G}j \to \func{G}j'$ to have components
		\begin{align*}
			(\func{G}h)_{[a,b]} := (\func{F}h)^1_{[\rho_j][a,b]}: (\func{F}j)^1 [\rho_j][a,b] &\longrightarrow (\func{F}j')^1 [\rho_{j'}][a,b]
		,\end{align*}
		\item We define $\varepsilon: (\Delta_L,\func{G}) \to \func{F}$ to have components $\varepsilon_{j}:= (\rho_j,\id_{(\func{G}j)^1})$,
		\item For each interval $[a,b]$ in $L$, we take the limit $(\func{L}_{[a,b]},\eta_{[a,b]})$ of $\ev_{[a,b]} \circ \func{G}: \cat{J} \to \cat{C}$, if it exists, and fail if not,
		\item For each pair $[a,b] \supseteq [c,d]$ of intervals in $L$, we use naturality of $\ev_-$ to get cones $(\func{L}_{[a,b]}, \ev_{[a,b] \supseteq [c,d]} \circ \eta_{[a,b]})$ over $\ev_{[c,d]} \circ \func{G}$,
		\item We get cone maps $\func{L}_{[a,b] \supseteq [c,d]}: \func{L}_{[a,b]} \to \func{L}_{[c,d]}$ via the u.p. of $(\func{L}_{[c,d]},\eta_{[c,d]})$,
		\item We assemble the above into a limit cone $(\func{L},\eta)$ with $\func{L}: [a,b] \mapsto \func{L}_{[a,b]}$, $([a,b] \supseteq [c,d]) \mapsto \func{L}_{[a,b] \supseteq [c,d]}$ and $(\eta_j)_{[a,b]}:= (\eta_{[a,b]})_j$.
		\item We return the cone $((L,\func{L}),\eta \circ \varepsilon)$ as a limit for $\func{F}$.
	\end{enumerate}
\end{nconstr}

\begin{figure}
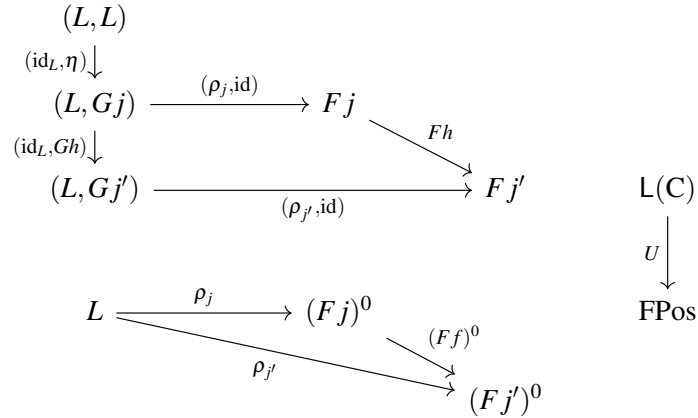

\figlimalgorithm
\vspace{-10pt}
\caption{A diagrammatic presentation of Construction \ref{constr:limit}.}
\label{fig:limalg}
\end{figure}

The end-to-end procedure is depicted in Figure \ref{fig:limalg}.

\begin{nlemma}[]
    \label{lemma:contractwelldefn}
    For every finite diagram $\func{F}: \cat{J} \to \constr{L}(\cat{C})$,
    Construction \ref{constr:limit} is well-defined.
\end{nlemma}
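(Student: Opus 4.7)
The plan is to verify each of the ten steps of Construction~\ref{constr:limit} in order, checking that the output of each step is a well-formed input to the next. Only Step~6 is permitted to fail; every other step must produce valid data whenever its inputs are valid.

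For Step~1 we use that $\FinPos$ is finitely complete, so $(L,\rho)$ exists. For Steps~3 and~4, I would first unpack the Grothendieck construction to observe that $\func{F}h$ has components $((\func{F}h)^0, (\func{F}h)^1)$ with $(\func{F}h)^1 : (\func{F}j)^1 \to (\func{F}j')^1 \circ [(\func{F}h)^0]$ a natural transformation, and that the cone condition on $\rho$ gives $(\func{F}h)^0 \circ \rho_j = \rho_{j'}$ so that $[(\func{F}h)^0] \circ [\rho_j] = [\rho_{j'}]$ by functoriality of $[-]$. It follows that the codomain of $(\func{G}h)_{[a,b]} := (\func{F}h)^1_{[\rho_j][a,b]}$ is indeed $(\func{F}j')^1[\rho_{j'}][a,b]$ as required, and naturality of $\func{G}h$ in $[a,b]$ is inherited from naturality of $(\func{F}h)^1$. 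Functoriality of $\func{G}: \cat{J} \to \Func([L],\cat{C})$ reduces to functoriality of $h \mapsto (\func{F}h)^1$ (modulo horizontal composition with $[\rho_j]$), which follows from the composition law in $\constr{L}(\cat{C})$. For Step~5, $\varepsilon_j := (\rho_j,\id)$ is a morphism in $\constr{L}(\cat{C})$ since its shape-component is $\rho_j$ and its relabelling component is the identity on $\func{G}j = (\func{F}j)^1 \circ [\rho_j]$; naturality amounts to computing that $(\rho_{j'},\id) \circ (\id_L,\func{G}h) = (\rho_{j'},\func{G}h)$ agrees with $\func{F}h \circ (\rho_j,\id) = (\rho_{j'},(\func{F}h)^1 \cdot [\rho_j])$, which holds since $((\func{F}h)^1 \cdot [\rho_j])_{[a,b]} = (\func{G}h)_{[a,b]}$ by definition.

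For Step~6, if any of the pointwise limits $(\func{L}_{[a,b]},\eta_{[a,b]})$ fails to exist the algorithm reports failure, which is allowed; otherwise we proceed. For Steps~7 and~8, I would observe that the family $\{(\ev_{[a,b] \supseteq [c,d]})_j := (\func{G}j)_{[a,b] \supseteq [c,d]}\}$ is a natural transformation $\ev_{[a,b]} \circ \func{G} \to \ev_{[c,d]} \circ \func{G}$ by naturality of the components $\func{G}h$. Composing with the limiting cone $\eta_{[a,b]}$ produces a cone on $\ev_{[c,d]} \circ \func{G}$ with apex $\func{L}_{[a,b]}$, and the universal property of $(\func{L}_{[c,d]},\eta_{[c,d]})$ produces a unique map $\func{L}_{[a,b] \supseteq [c,d]}$.

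The main work is in Step~9, where one has to check that $\func{L}$ is an actual functor $[L] \to \cat{C}$ and that $\eta$ is a cone in $\Func([L], \cat{C})$. Both rely on the uniqueness clause of the limit universal property: given composable refinements $[a,b] \supseteq [c,d] \supseteq [e,f]$, both $\func{L}_{[c,d] \supseteq [e,f]} \circ \func{L}_{[a,b] \supseteq [c,d]}$ and $\func{L}_{[a,b] \supseteq [e,f]}$ are factorisations of the same cone through $(\func{L}_{[e,f]},\eta_{[e,f]})$, and must therefore agree; identities are handled analogously. For $\eta_j$ to be natural in $[a,b]$, the square with edges $(\eta_j)_{[a,b]}$, $(\func{G}j)_{[a,b] \supseteq [c,d]}$, $\func{L}_{[a,b] \supseteq [c,d]}$ and $(\eta_j)_{[c,d]}$ must commute, which is exactly the defining triangle of $\func{L}_{[a,b] \supseteq [c,d]}$ evaluated at $j$. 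Naturality of $\eta$ in $j$ reduces similarly to the fact that each $\eta_{[a,b]}$ is a cone. Finally, in Step~10, the composite $\eta \circ \varepsilon$ is a cone in $\constr{L}(\cat{C})$ by the composition law for Grothendieck morphisms and the naturality statements just established. The main obstacle in the plan is therefore the careful bookkeeping of Step~9, where universal-property uniqueness is invoked twice (once for functoriality of $\func{L}$, once to match the two cones assembled from $\eta$ and from $\varepsilon$) -- no novel construction is needed, but the coherence of the assembly needs to be pinned down with care.
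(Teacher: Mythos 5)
Your proposal is correct and follows essentially the same route as the paper's proof: Step (1) via finite completeness of \FinPos, well-definedness and functoriality of $\func{G}$ via the cone condition $(\func{F}h)^0 \circ \rho_j = \rho_{j'}$ and the composition law in $\constr{L}(\cat{C})$, and functoriality of $\func{L}$ plus both naturality directions of $\eta$ via uniqueness of cone factorisations. You simply spell out in more detail (correctly) the Grothendieck-construction bookkeeping that the paper compresses into ``follows by unwinding definitions.''
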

\begin{proof}
	The limit at Step (1) exists because $\FinPos$ is finitely complete \cite[12.6.1]{adamekabstract1990}.
	Since $(L,\rho)$ is a cone, for every $h \in \cat{J}(j,j')$, we have $(\func{F}h)^0 \circ \rho_j = \rho_{j'}$, hence $\func{G}h$ is well-defined.
	Moreover, since $\func{F}$ and $[-]$ are functors, $\func{G}$ respects identities,
	and since for $h \in \cat{J}(j,j')$ and $h' \in \cat{J}(j',j'')$, we have
	$(\func{F}(h' \circ h)^1)_{[\rho_j][a,b]} = (\func{F}(h') \circ [\rho_{j'}]) \circ \func{F}h)^1)_{[\rho_j][a,b]}$,
	$\func{G}$ respects composites and thus is a functor.
	The naturality condition for $\varepsilon$ in Step (5) follows by unwinding definitions.
	Functoriality of $\func{L}$ in Step (9) follows by uniqueness of the cone maps,
	and finally,
	naturality of $\eta$ holds along $\cat{J}$ due to $(\eta_-)_{[a,b]} := \eta_{[a,b]}$ being a cone,
	and along $[P]$ due to $\func{L}(-\supseteq-)$ being a map of cones.
\end{proof}

\begin{nprop}[]
	\label{prop:basechangecont}
	Let $f: P \to Q$ be a monotone function and $\cat{C}$ a category.
	Then $- \circ [f]: \Func([Q],\cat{C}) \to \Func([P],\cat{C})$ preserves all pointwise limits which exist in $\Func([Q],\cat{C})$.
\end{nprop}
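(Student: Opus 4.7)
The plan is to exploit the fundamental identity that, for each interval $[a,b] \in [P]$, the evaluation functor factorises as $\ev_{[a,b]} \circ (- \circ [f]) = \ev_{[f][a,b]}$ as functors $\Func([Q],\cat{C}) \to \cat{C}$. This is immediate from the definition of precomposition. With this in hand, I would let $(L,\eta)$ be a pointwise limit of some $\func{F}: \cat{J} \to \Func([Q],\cat{C})$ and consider the putative limit cone $(L \circ [f], \eta \ast [f])$ over $(- \circ [f]) \circ \func{F}$, where $\eta \ast [f]$ denotes the whiskering of $\eta$ by $[f]$.

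Next I would verify the universal property pointwise. Let $(M,\mu)$ be any competing cone over $(- \circ [f]) \circ \func{F}$ in $\Func([P],\cat{C})$. Applying $\ev_{[a,b]}$ for each $[a,b] \in [P]$ and using the key identity, the cone $(M[a,b], (\mu_-)_{[a,b]})$ becomes a cone over $\ev_{[f][a,b]} \circ \func{F}$, while $(L[f][a,b], (\eta_-)_{[f][a,b]})$ is a limit for this same diagram by the pointwise hypothesis on $(L,\eta)$. This produces a unique mediating morphism $\alpha_{[a,b]}: M[a,b] \to L[f][a,b]$ for every $[a,b]$.

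It then remains to check that the family $\alpha = (\alpha_{[a,b]})$ is natural in $[a,b]$, so that it assembles into a morphism $M \to L \circ [f]$ in $\Func([P],\cat{C})$. For each containment $[a,b] \supseteq [c,d]$ in $[P]$, both composites around the candidate naturality square are morphisms $M[a,b] \to L[f][c,d]$, and I would argue that each defines a cone map from the cone at $[a,b]$ (transported to $[c,d]$ via $M$ or via $L \circ [f]$) to the limit cone at $[f][c,d]$. The uniqueness part of the universal property of $(L[f][c,d], (\eta_-)_{[f][c,d]})$ then forces them to agree. Finally, uniqueness of $\alpha$ in $\Func([P],\cat{C})$ follows from pointwise uniqueness.

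The calculation is essentially bookkeeping, and the only step requiring genuine care is the naturality of $\alpha$; this is the kind of check where it is tempting to appeal to a general principle (``pointwise limits are limits''), but since $\cat{C}$ is not assumed to have arbitrary limits, I would spell it out directly using the uniqueness clause of the universal property at each interval, which is exactly what licences the conclusion without any global completeness assumption on $\cat{C}$.
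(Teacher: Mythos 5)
Your proposal is correct and follows essentially the same route as the paper's own proof: both define the mediating morphism componentwise via the identity $\ev_{[a,b]} \circ (- \circ [f]) = \ev_{[f][a,b]}$ and the universal property of the pointwise limit at each interval, then derive naturality and uniqueness from the uniqueness clause of that universal property. Your write-up merely spells out the naturality check in more detail than the paper does.
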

\begin{proof}
	Let $(\func{L},\eta)$ be a pointwise limit for a diagram $\func{G}: \cat{J} \to \func{L}(Q)$,
	and let $(\func{K},\chi)$ be a cone over $\func{G} \circ [f]$.
	We define a natural transformation $\gamma: \func{K} \to \func{L} \circ [f]$ by taking for each interval $[a,b]$ in $P$,
	the component $\gamma_{[a,b]}$ to be the unique map of cones $(\func{K}[a,b],\ev_{[a,b]}\chi) \to (\func{L}f[a,b],\ev_{[f][a,b]}\eta)$, which is given by the universal property of the pointwise limit.
	Naturality and uniqueness of $\gamma$ then follow both by uniqueness of the components.
\end{proof}

\begin{nthm}[]
	\label{thm:contractsound}
	Let $\cat{C}$ be a category.
	If $\func{F}: \cat{J} \to \constr{L}(\cat{C})$ is a finite diagram,
	and Construction \ref{constr:limit} succeeds for $\func{F}$, its output $((L,\func{L}),\varepsilon \circ \eta)$ is a limit for $\func{F}$.
\end{nthm}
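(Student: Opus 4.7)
The plan is to verify the universal property of the cone $((L, \func{L}), \varepsilon \circ \eta)$ by building a mediating morphism in two stages mirroring the Grothendieck structure, then arguing uniqueness componentwise. So suppose we have an arbitrary cone $((K, \func{K}), \theta)$ over $\func{F}$, where each $\theta_j = (\theta_j^0, \theta_j^1)$ consists of $\theta_j^0 : K \to (\func{F}j)^0$ and $\theta_j^1 : \func{K} \to (\func{F}j)^1 \circ [\theta_j^0]$. The cone condition on $\theta$ unpacks into (i)~the family $\{\theta_j^0\}$ forming a cone over $\func{U} \circ \func{F}$ in $\FinPos$, and (ii)~for each $h \in \cat{J}(j,j')$, a compatibility equation relating $\theta_j^1$, $\theta_{j'}^1$ and $(\func{F}h)^1$.

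First I would apply the universal property of the $\FinPos$-limit $(L, \rho)$ from Step~(1) of Construction~\ref{constr:limit} to condition~(i), obtaining a unique monotone map $\ell : K \to L$ with $\rho_j \circ \ell = \theta_j^0$ for every $j$. Second, for each interval $[a,b] \in [K]$, I observe the calculation
\[
(\func{G}j)([\ell][a,b]) \;=\; (\func{F}j)^1[\rho_j \circ \ell][a,b] \;=\; (\func{F}j)^1[\theta_j^0][a,b],
\]
so each $(\theta_j^1)_{[a,b]}$ is a map into the $j$-th object of the diagram $\ev_{[\ell][a,b]} \circ \func{G}$. Condition~(ii) then guarantees that $\{(\theta_j^1)_{[a,b]}\}_{j \in \cat{J}}$ is a cone over this diagram, so the universal property of the limit $(\func{L}_{[\ell][a,b]}, \eta_{[\ell][a,b]})$ from Step~(6) produces a unique mediator $\gamma_{[a,b]} : \func{K}[a,b] \to \func{L}([\ell][a,b])$.

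Third I would check that the family $\{\gamma_{[a,b]}\}$ assembles into a natural transformation $\gamma : \func{K} \to \func{L} \circ [\ell]$. For $[a,b] \supseteq [c,d]$ in $[K]$, the two composites $\func{L}([\ell][a,b] \supseteq [\ell][c,d]) \circ \gamma_{[a,b]}$ and $\gamma_{[c,d]} \circ \func{K}([a,b]\supseteq[c,d])$ both mediate the same cone over $\ev_{[\ell][c,d]} \circ \func{G}$ — which follows because Steps~(7)--(9) were designed precisely so that $\func{L}$'s structure maps are maps of cones — and hence agree by uniqueness at $[\ell][c,d]$. The pair $(\ell, \gamma)$ therefore defines a morphism $(K, \func{K}) \to (L, \func{L})$ in $\constr{L}(\cat{C})$, and a direct unwinding of Grothendieck composition shows $(\varepsilon_j \circ \eta_j) \circ (\ell, \gamma) = \theta_j$, using the definitional identity $(\eta_j)_{[\ell][a,b]} \circ \gamma_{[a,b]} = (\theta_j^1)_{[a,b]}$.

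Finally, uniqueness is componentwise: any candidate $(\ell', \gamma')$ must satisfy $\rho_j \circ \ell' = \theta_j^0$ for all $j$, forcing $\ell' = \ell$ by the $\FinPos$-limit, and then each $\gamma'_{[a,b]}$ must mediate the same $\cat{C}$-cone as $\gamma_{[a,b]}$, forcing $\gamma' = \gamma$. The main obstacle I anticipate is purely bookkeeping: keeping the many identifications $[\rho_j \circ \ell] = [\rho_j] \circ [\ell]$ and $\func{G}j \circ [\ell] = (\func{F}j)^1 \circ [\theta_j^0]$ straight, so that the universal-property inputs at each stage genuinely live in the diagram one claims. Functoriality of $[-]$ together with the cone conditions on $\theta^0$ handle all of this, but the calculations must be spelled out with care.
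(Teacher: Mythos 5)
Your proof is correct and follows essentially the same route as the paper's: first obtain the shape map $K \to L$ from the universal property of the $\FinPos$-limit, then build the relabelling natural transformation componentwise from the pointwise limits and deduce naturality and uniqueness from uniqueness of the cone mediators. The only difference is that you inline this second stage, whereas the paper packages it as Proposition~\ref{prop:basechangecont} (precomposition with $[k]$ preserves pointwise limits) and simply cites it.
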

\begin{proof}
	By Lemma \ref{lemma:contractwelldefn}, Construction \ref{constr:limit} is well-defined.
	Let $((K,\func{K}),\gamma)$ be a cone over $\func{F}$.
	Since $L$ is a limit for $\func{U} \circ \func{F}$, we have a unique map $k: K \to L$.
	Since $(\func{L},\eta)$ is a pointwise limit, by Proposition \ref{prop:basechangecont}, $- \circ [k]$ preserves it, so we have a unique map of cones $\chi: \func{K} \to \func{L}$, so we take $(k,\chi)$ as our map.
\end{proof}

\begin{ncor}[]
\label{cor:jcompletelabelint}
    Let $\cat{J}$ be a finite category.
	If a category $\cat{C}$ has $\cat{J}$-limits, then so does $\constr{L}(\cat{C})$.
\end{ncor}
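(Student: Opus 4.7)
The plan is to invoke Theorem \ref{thm:contractsound}, which already establishes that when Construction \ref{constr:limit} succeeds, its output is a genuine limit. It therefore suffices to argue that Construction \ref{constr:limit} never fails when $\cat{C}$ has all $\cat{J}$-limits. Inspecting the ten steps of the Construction, the only two that invoke the existence of limits are Step (1), which takes a limit of $\func{U} \circ \func{F}: \cat{J} \to \FinPos$, and Step (6), which takes for each interval $[a,b]$ in $L$ the limit of $\ev_{[a,b]} \circ \func{G}: \cat{J} \to \cat{C}$.

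First I would address Step (1): since $\cat{J}$ is finite and $\FinPos$ is finitely complete (as cited in the proof of Lemma \ref{lemma:contractwelldefn}), the limit $(L,\rho)$ exists. Then for Step (6), since $\cat{C}$ has all $\cat{J}$-limits by hypothesis, each pointwise limit $(\func{L}_{[a,b]},\eta_{[a,b]})$ exists. Steps (2)--(5) and (7)--(9) involve no choice of limits beyond cone-map factorisations obtained from universal properties of those already constructed, so they cannot obstruct the procedure.

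It remains to confirm the pointwise hypothesis used implicitly in Theorem \ref{thm:contractsound} through Proposition \ref{prop:basechangecont}. The functor $\func{L}: [L] \to \cat{C}$ assembled in Step (9) is built by choosing a limit $\func{L}_{[a,b]}$ at each interval independently, so by construction $\ev_{[a,b]}$ sends the assembled cone $(\func{L},\eta)$ to the chosen limit $(\func{L}_{[a,b]},\eta_{[a,b]})$. Hence $(\func{L},\eta)$ is a pointwise limit of $\func{G}$ in $\Func([L],\cat{C})$, which is precisely the condition needed to apply Proposition \ref{prop:basechangecont} within the proof of Theorem \ref{thm:contractsound}.

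Combining these observations, Construction \ref{constr:limit} succeeds on any finite diagram $\func{F}: \cat{J} \to \constr{L}(\cat{C})$, and Theorem \ref{thm:contractsound} identifies its output as a limit for $\func{F}$. I do not anticipate any real obstacle here; the whole content of the corollary is essentially bookkeeping against Construction \ref{constr:limit}, with Theorem \ref{thm:contractsound} doing the substantive work.
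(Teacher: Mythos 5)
Your proposal is correct and follows essentially the same route as the paper: both arguments reduce the corollary to Theorem \ref{thm:contractsound} by checking that Construction \ref{constr:limit} cannot fail, using finite completeness of $\FinPos$ for Step (1) and the hypothesis on $\cat{C}$ (equivalently, that $\Func([L],\cat{C})$ inherits pointwise $\cat{J}$-limits) for Step (6). Your explicit verification that the assembled cone is pointwise is a welcome elaboration of a point the paper leaves implicit, but it is not a different proof.
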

\begin{proof}
	This is a known result about fibred categories, see e.g. \cite[Thm.1]{tarleckifundamental1991}, which we can extract as a consequence of Theorem \ref{thm:contractsound}.
	Since $\cat{C}$ has $\cat{J}$-limits, so will each functor category $\Func([P],\cat{C})$, as they inherit the pointwise limits from $\cat{C}$.
	Hence Construction \ref{constr:limit} will succeed for all diagrams $\func{F}:\cat{J} \to \constr{L}(\cat{C})$, and thus by Theorem \ref{thm:contractsound} every such diagram has a limit.
\end{proof}

For a converse statement, we can prove that if $\cat{C}$ has an initial object, then our Construction \ref{constr:limit} is searching for a limit in the correct fibre:
\begin{nprop}[]
\label{prop:fibpresliminit}
	Let $\cat{C}$ be a category with an initial object $0 \in \cat{C}$.
	The fibration $\func{U}: \constr{L}(\cat{C}) \to \FinPos$ preserves all existing limits.
\end{nprop}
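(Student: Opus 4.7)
The plan is to exploit the initial object $0 \in \cat{C}$ by considering, for any $Q \in \FinPos$, the constant labelling $\Delta_0: [Q] \to \cat{C}$ at $0$. By initiality, for any functor $\func{Y}: [Q] \to \cat{C}$ there is a unique natural transformation $\Delta_0 \to \func{Y}$, so every morphism out of $(Q, \Delta_0)$ in $\constr{L}(\cat{C})$ is determined by its shape component alone. This lets me upgrade shape cones in $\FinPos$ to cones in $\constr{L}(\cat{C})$ in an essentially canonical way.

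Suppose then that $((L', \func{L}'), \eta')$ is a limit of a diagram $\func{F}: \cat{J} \to \constr{L}(\cat{C})$ and write $\rho' := \func{U}\eta'$. Since $\func{U}$ is a functor, $(L', \rho')$ is a cone over $\func{U} \circ \func{F}$; the task is to verify its universal property. Given any cone $(Q, \sigma)$ over $\func{U} \circ \func{F}$ in $\FinPos$, I lift it to a cone over $\func{F}$ by setting $\beta_j := (\sigma_j, !_j)$, where $!_j: \Delta_0 \to (\func{F}j)^1 \circ [\sigma_j]$ is the unique transformation produced by initiality. The cone identity $\func{F}h \circ \beta_j = \beta_{j'}$ reduces to the shape relation $(\func{F}h)^0 \circ \sigma_j = \sigma_{j'}$, which holds since $\sigma$ is a cone, together with uniqueness of transformations out of $\Delta_0$. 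The universal property of $((L', \func{L}'), \eta')$ then yields a unique $(m, \mu): (Q, \Delta_0) \to (L', \func{L}')$ with $\eta'_j \circ (m, \mu) = \beta_j$, whose shape component $m$ satisfies $\rho'_j \circ m = \sigma_j$.

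For uniqueness of $m$, I suppose $m': Q \to L'$ is any other monotone map with $\rho'_j \circ m' = \sigma_j$. Initiality produces a unique $\tilde{\mu}: \Delta_0 \to \func{L}' \circ [m']$, so $(m', \tilde{\mu})$ is a morphism in $\constr{L}(\cat{C})$. The composite $\eta'_j \circ (m', \tilde{\mu})$ has shape $\rho'_j \circ m' = \sigma_j$, and its natural-transformation part is forced to equal $!_j$ by initiality, so $\eta'_j \circ (m', \tilde{\mu}) = \beta_j$. The universal property of $((L', \func{L}'), \eta')$ now forces $(m, \mu) = (m', \tilde{\mu})$; in particular $m = m'$, giving uniqueness.

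The main obstacle is this last uniqueness step: one must ensure that two \emph{a priori} different shape lifts yield the same morphism in $\constr{L}(\cat{C})$, so that the universal property of the limit there can be invoked to compare them. This is precisely what initiality of $\Delta_0$ provides, by collapsing the relabelling datum to a unique canonical choice, and is also the reason the hypothesis of an initial object in $\cat{C}$ cannot be dispensed with.
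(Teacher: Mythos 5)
Your proof is correct, and it is built on exactly the same key idea as the paper's: the constant labelling $\Delta_0 : [Q] \to \cat{C}$ at the initial object, together with the observation that a morphism out of $(Q,\Delta_0)$ in $\constr{L}(\cat{C})$ is determined by its shape component alone. The difference is purely one of packaging. The paper notes that the assignment $Q \mapsto (Q,\Delta_0)$, $f \mapsto (f,!)$ is a left adjoint to $\func{U}$ and then invokes the standard fact that right adjoints preserve all limits that exist. Your argument unwinds that same fact by hand: your ``unique lift of a shape cone to a cone over $\func{F}$'' step is precisely the universal property of the unit of this adjunction, and your existence-plus-uniqueness verification is the usual proof that right adjoints preserve limits, specialised to this fibration. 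What the paper's version buys is brevity and a reusable structural fact (the adjunction itself); what yours buys is a self-contained argument that makes explicit where initiality is genuinely needed, namely in collapsing the relabelling datum so that two a priori different shape lifts give the same morphism of $\constr{L}(\cat{C})$. Both are sound; if you want to match the paper's economy, you could simply record that $(Q,\Delta_0)$ is free over $Q$, i.e.\ that $\func{U}$ has a left adjoint, and conclude by preservation of limits by right adjoints.
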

\begin{proof}
	The functor $\func{U}$ has a left-adjoint $\func{F}: \FinPos \to \constr{L}(\cat{C})$,
	which acts as $P \mapsto (P,\Delta_0: [P] \to \cat{C})$ and $f \mapsto (f,!)$,
	where $\Delta_0$ is the constant functor on $0 \in \cat{C}$.
\end{proof}

The following lemma allows us to safely invoke completion arguments:
\begin{nlemma}[]
\label{lemma:relabelpostcomppresrefl}
	Let $\func{F}: \cat{C} \to \cat{D}$ be a functor.
	If $\func{F}$ preserves, resp. reflects, all $\cat{J}$-limits which exist in $\cat{C}$, resp. $\cat{D}$,
	then $\constr{L}(\func{F}): \constr{L}(\cat{C}) \to \constr{L}(\cat{D})$ preserves, resp. reflects, all $\cat{J}$-limits produced by Construction \ref{constr:limit}.
\end{nlemma}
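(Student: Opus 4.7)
The plan is to trace Construction \ref{constr:limit} in parallel for the diagrams $\func{H}: \cat{J} \to \constr{L}(\cat{C})$ and $\constr{L}(\func{F}) \circ \func{H}: \cat{J} \to \constr{L}(\cat{D})$, exploiting the fact that $\constr{L}(\func{F})$ is strictly fibred over $\FinPos$: by its definition it acts as the identity on shape components and by post-composition with $\func{F}$ on label components. In particular, $\func{U} \circ \constr{L}(\func{F}) = \func{U}$, so step (1) returns the same shape cone $(L,\rho)$ for both diagrams. Unpacking steps (2)--(5) then shows that the intermediate diagram $\func{G}'$ associated to $\constr{L}(\func{F}) \circ \func{H}$ satisfies $\func{G}'j = \func{F} \circ \func{G}j$, and therefore $\ev_{[a,b]} \circ \func{G}' = \func{F} \circ \ev_{[a,b]} \circ \func{G}$ at every interval $[a,b]$ of $L$; the associated $\varepsilon$ transformations match up strictly as well.

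For the preservation statement, I would assume Construction \ref{constr:limit} succeeds on $\func{H}$, giving an output $((L,\func{L}),\varepsilon \circ \eta)$. At each interval $[a,b]$, the pointwise limit $(\func{L}_{[a,b]},\eta_{[a,b]})$ of $\ev_{[a,b]} \circ \func{G}$ exists in $\cat{C}$; preservation of $\func{F}$ then turns it into a limit of $\ev_{[a,b]} \circ \func{G}'$ in $\cat{D}$. Thus Construction \ref{constr:limit} also succeeds on $\constr{L}(\func{F}) \circ \func{H}$, and the uniqueness of the cone maps in step (8) together with naturality of $\func{F}$ ensure that its output coincides on the nose with $\constr{L}(\func{F})$ applied to $((L,\func{L}),\varepsilon \circ \eta)$.

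For reflection, I would take a cone $((K,\func{K}),\gamma)$ over $\func{H}$ whose image under $\constr{L}(\func{F})$ is a $\cat{J}$-limit of $\constr{L}(\func{F}) \circ \func{H}$ arising from Construction \ref{constr:limit}. The shape-level part of this limit is, by construction, the $\FinPos$-limit $(L,\rho)$, and since $\constr{L}(\func{F})$ is identity on shapes, the comparison map exhibits $K \cong L$ in $\FinPos$. For each interval $[a,b]$, the evaluated components of $\gamma$ form a cone in $\cat{C}$ over $\ev_{[a,b]} \circ \func{G}$ whose image under $\func{F}$ is the pointwise limit in $\cat{D}$; reflection of $\func{F}$ then promotes each such cone to a limit in $\cat{C}$. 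Reassembling these pointwise limits as in step (9) and appealing to Theorem \ref{thm:contractsound} shows $((K,\func{K}),\gamma)$ is already a limit of $\func{H}$.

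The hardest part will be the bookkeeping in the reflection direction: one must verify carefully that the shape-level isomorphism $K \cong L$ and the reflected pointwise limits combine into a coherent cone in $\constr{L}(\cat{C})$, and that this cone is precisely $((K,\func{K}),\gamma)$ rather than merely isomorphic to it. This should follow from naturality of $\ev$ and the uniqueness clauses already exploited in the proof of Theorem \ref{thm:contractsound}, but deserves explicit attention rather than being waved away.
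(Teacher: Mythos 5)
Your proposal is correct and follows essentially the same route as the paper's own proof: both exploit that $\constr{L}(\func{F})$ is the identity on shape components and acts by post-composition on labels, reduce preservation and reflection to the pointwise limits at each interval of $L$, and conclude by matching the resulting cone against the specification of Construction~\ref{constr:limit} via Theorem~\ref{thm:contractsound}. Your version merely spells out the interval-by-interval bookkeeping that the paper compresses into the phrases ``matches the output of Construction~\ref{constr:limit}'' and ``satisfies the specification of Construction~\ref{constr:limit}''.
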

\begin{proof}
	For preservation, let $((L,\func{L}),\eta)$ be a limit for a diagram $\func{G}: \cat{J} \to \constr{L}(\cat{C})$ obtained via Construction \ref{constr:limit}.
	Then $\constr{L}(\func{F})$ sends this limit to $((L,\func{F} \circ \func{L}),\varepsilon)$,
	where $\varepsilon := (\eta_j^0,\func{F} \eta_j^1)$.
	Since $\func{F}$ preserves $\cat{J}$-limits in $\cat{C}$, this cone matches the output of Construction \ref{constr:limit} for $\constr{L}(\func{F}) \circ \func{G}$, and thus by Theorem \ref{thm:contractsound} is limiting.

	For reflection, let $((L,\func{L}),\eta)$ be a cone over a diagram $\func{G}: \cat{J} \to \constr{L}(\cat{C})$ which is mapped under $\constr{L}(\func{F})$ to the limit cone $((K,\func{K}),\varepsilon)$
	obtained via Construction \ref{constr:limit} on $\constr{L}(\func{F}) \circ \func{G}$.
	Since $\constr{L}(\func{F})$ acts only on labels, we must have $L = K$.
	Moreover, since the limit $(L,\func{K})$ is pointwise and $\func{F}$ reflects $\cat{J}$-limits in $\cat{D}$, $((L,\func{L}),\eta)$ satisfies the specification of Construction \ref{constr:limit}, and thus is a limit for $\func{G}$.
\end{proof}

\subsection{Limits of Posetal Diagrams}
\label{subsec:limposdiagloc}

Having presented a procedure for computing limits in $\constr{L}(\cat{C})$, we will conclude our technical exposition with a study of the corresponding limit procedure for the subcategory $\constr{P}(\cat{C})$, and extract some consequence for local diagrams.
A lot of the heavy lifting of our results in this section hinges on the following lemma, which allows a translation of Construction \ref{constr:limit} from labelled intervals to posetal diagrams.
\begin{nlemma}[]
\label{lemma:dlatposcont}
	The subcategory inclusion $\FinDLat \to \FinPos$ preserves finite limits.
\end{nlemma}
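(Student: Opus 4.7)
The plan is to reduce the claim to preservation of finite products and equalizers, since every finite limit in a finitely complete category can be built from these, and $\FinDLat$ is known to be finitely complete (or one can verify this as one goes). For each of the two cases, the strategy is to compute the limit explicitly in $\FinPos$ and check that the result, together with its cone legs, already lies in $\FinDLat$ and satisfies the universal property there, from which preservation follows tautologically.

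For a finite family $L_1,\ldots,L_n$ of finite distributive lattices, the product in $\FinPos$ is the set-theoretic product with componentwise order. Meets and joins are computed componentwise, so this poset is a distributive lattice with $\bot = (\bot,\ldots,\bot)$ and $\top = (\top,\ldots,\top)$; the projections preserve meets and joins, hence are lattice homomorphisms. Any cone of lattice homomorphisms from some $K \in \FinDLat$ induces the unique mediating map in $\FinPos$, which is itself a lattice homomorphism by componentwise verification. The terminal case is the one-point lattice, which is terminal in both categories.

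For equalizers, given lattice homomorphisms $f,g : L \to M$, the equalizer in $\FinPos$ is the subposet $E = \{x \in L : f(x) = g(x)\}$. The key step is to show that $E$ is a sublattice. For $x,y \in E$, since $f$ and $g$ are both lattice homomorphisms,
\begin{align*}
    f(x \wedge y) &= f(x) \wedge f(y) = g(x) \wedge g(y) = g(x \wedge y),
\end{align*}
and similarly for $x \vee y$. Moreover $f(\bot) = \bot = g(\bot)$ and $f(\top) = \top = g(\top)$, so $\bot, \top \in E$. Thus $E$ is a bounded sublattice of $L$, distributive by inheritance, and the inclusion $E \hookrightarrow L$ is a lattice homomorphism. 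Any lattice-homomorphism cone $h : K \to L$ with $f \circ h = g \circ h$ factors through $E$ in $\FinPos$, and the factoring map is automatically a lattice homomorphism since the inclusion $E \hookrightarrow L$ reflects meets and joins. This gives the universal property in $\FinDLat$.

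Combining these two cases via the standard construction of a limit as the equalizer of two parallel maps between products shows that for any finite diagram $F : \cat{J} \to \FinDLat$ the limit cone computed in $\FinDLat$ is, on the nose, the limit cone in $\FinPos$. No step poses a genuine obstacle; the only mildly delicate point is the closure of the equalizer under the lattice operations together with the preservation of the bounds, and both are immediate from the hom conditions on $f$ and $g$.
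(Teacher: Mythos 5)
Your proof is correct, but it takes a genuinely different route from the paper. You argue from first principles: you show that finite products and equalizers of finite distributive lattices, computed in $\FinPos$, already carry the structure of objects of $\FinDLat$ with the cone legs being lattice homomorphisms, and then invoke the standard reduction of finite limits to products and equalizers. This in fact establishes the stronger statement that the inclusion \emph{creates} finite limits, and it yields explicit descriptions that the paper later relies on implicitly --- e.g.\ in Proposition~\ref{prop:posdiagsubcateqclosed} the equalizer $E \subseteq P$ is exactly your bounded sublattice $\{x : f(x) = g(x)\}$, and your computation shows directly that $e$ is a lattice homomorphism, where the paper has to extract this from the abstract statement. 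The paper instead proceeds via the Birkhoff Representation Theorem: it uses the adjoint equivalence $\FinPos(-,2) \dashv \FinDLat(-,2)$ to transport the limit in $\FinDLat$ to a colimit in $\FinPos$, then applies the enriched continuity of the internal contravariant hom $\FinPos(-,2)$ for the Cartesian closed structure of $\FinPos$ to turn that colimit back into a limit, identifying the composite of the two dualizing functors with the inclusion. The duality argument is shorter given the machinery already set up in the paper (Birkhoff duality is cited elsewhere and is the conceptual engine of the whole construction), whereas yours is elementary, self-contained, and more informative about what the limits actually look like; the two are complementary. The only point worth double-checking in your write-up, and which you do handle correctly, is the nullary case: since the paper's lattice homomorphisms preserve empty meets and joins, you need $\bot, \top \in E$, which your observation $f(\bot) = \bot = g(\bot)$ and $f(\top) = \top = g(\top)$ supplies.
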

\begin{proof}
	Let $(L,\eta)$ be a limit cone for a finite diagram $\func{F}: \cat{J} \to \FinDLat$.
	By the Birkhoff Representation Theorem \cite[page 262]{wraithusing1993},
	the functors $\FinPos(-,2): \FinPos^\Op \to \FinDLat$ and $\FinDLat(-,2): \FinDLat^\Op \to \FinPos$ form an adjoint equivalence,
	where 2 denotes the two-element distributive lattice $\{\bot \to \top\}$ and
	the hom-sets are equipped with their respective pointwise orders.
	Since $\FinDLat(-,2)$ is a right adjoint, it preserves the limit $(L,\eta)$,
	sending it to the colimit of $\FinDLat(\func{F}-,2): \cat{J}^\Op \to \FinPos$.

	It suffices now to show that the composite of the dualising functor $\FinPos(-,2)$ and subcategory inclusion $\FinDLat \to \FinPos$ preserves finite colimits.
	But this is just given by the internal contravariant hom $\FinPos(-,2): \FinPos^\Op \to \FinPos$ for the Cartesian closed category $\FinPos$ \cite[27.3.1]{adamekabstract1990}.
	By the enriched variant of the familiar continuity result \cite[3.29]{kellybasic1982},
	this sends the colimit $(\FinDLat(L,2),\FinDLat(\eta,2))$
	to the limit $(\FinPos(\FinDLat(L,2),2), \FinPos(\FinDLat(\eta,2),2))$
	of the composite diagram $\cat{J} \to \FinPos$,
	which, by the duality, is isomorphic to the image of the initial cone under the inclusion $\FinDLat \to \FinPos$.
\end{proof}

\begin{nprop}[]
	\label{prop:posdiagsubcateqclosed}
    If $\cat{C}$ has all equalisers, then $\constr{P}(\cat{C})$ is closed under equalisers taken in $\constr{L}(\cat{C})$.
\end{nprop}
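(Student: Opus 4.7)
The plan is to invoke Construction \ref{constr:limit} in the parallel-pair case and verify that each of its outputs remains within $\constr{P}(\cat{C})$. Given a parallel pair $(f_1,\alpha_1),(f_2,\alpha_2)\colon (P,\func{X}) \rightrightarrows (Q,\func{Y})$ in $\constr{P}(\cat{C})$, Construction \ref{constr:limit} produces an equaliser $((L,\func{L}),(\rho_0,\eta_0))$ in $\constr{L}(\cat{C})$, where $L$ is the $\FinPos$-equaliser of $f_1,f_2$ and $\func{L}\colon [L] \to \cat{C}$ is obtained pointwise as the $\cat{C}$-equaliser of the components of $\alpha_1,\alpha_2$ precomposed along $[\rho_0]$ and $[\rho_1]$, where $\rho_1 := f_1 \circ \rho_0 = f_2 \circ \rho_0$.

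For the shape data, I would first show that $L$ is a distributive lattice and $\rho_0$ is a lattice homomorphism. Since $f_1,f_2$ are lattice homomorphisms and $\FinDLat$ is finitely complete (being equivalent to $\FinPos^\Op$ by Birkhoff duality), they admit an equaliser in $\FinDLat$; by Lemma \ref{lemma:dlatposcont}, the inclusion $\FinDLat \to \FinPos$ preserves this equaliser, so that equaliser coincides (up to canonical isomorphism) with $L$, establishing both claims. The same argument shows that the factorising map from any equalising cone in $\constr{P}(\cat{C})$ is itself a lattice homomorphism, so the $\constr{L}(\cat{C})$-universal property descends to a $\constr{P}(\cat{C})$-universal property.

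The main work is to establish that $\func{L}$ is local. Fix an atomic cospan $[a,a'] \supseteq [b,b'] \subseteq [c,c']$ in $[L]$; by Proposition \ref{prop:latatomiciffjoin} and Lemma \ref{lemma:intervalmeetsemilat}, its pullback in $[L]$ is $[a \land c, a' \lor c']$. Applying Corollary \ref{cor:lathompreservesatomiccospans} to the lattice homomorphisms $\rho_0$ and $\rho_1$, both the atomic cospan and its pullback are preserved by $[\rho_0]$ and $[\rho_1]$. The locality of $\func{X}$ and $\func{Y}$ then promotes these to pullback squares in $\cat{C}$. Since $\func{L}$ is by construction a pointwise $\cat{C}$-equaliser of the two local functors $\func{X} \circ [\rho_0]$ and $\func{Y} \circ [\rho_1]$, and equalisers commute with pullbacks in $\cat{C}$ as limits commute with limits, the pullback of $\func{L}[a,a'] \to \func{L}[b,b'] \leftarrow \func{L}[c,c']$ in $\cat{C}$ is precisely $\func{L}[a \land c, a' \lor c']$, which is the required locality condition.

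The main obstacle is this last step. Although it reduces conceptually to a ``limits commute with limits'' argument, carrying it out correctly requires simultaneously tracking three layers of structure — the lattice-homomorphism preservation of atomic cospans and their pullbacks, the locality of the labellings $\func{X}$ and $\func{Y}$, and the pointwise nature of the equaliser defining $\func{L}$ — and checking that the canonical mediating isomorphisms line up. All three ingredients are already in hand, so the verification should be essentially routine once the indices are set up correctly.
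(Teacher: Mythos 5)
Your proposal is correct and follows essentially the same route as the paper's proof: the shape-level equaliser is identified as a lattice homomorphism via Lemma~\ref{lemma:dlatposcont} and Birkhoff duality, Corollary~\ref{cor:lathompreservesatomiccospans} transports atomic cospans and their pullbacks along the equaliser legs, locality of $\func{X}$ and $\func{Y}$ yields pullback squares, and the conclusion follows because pointwise equalisers of pullback squares are again pullbacks (limits commute with limits). The only cosmetic difference is that you route the construction explicitly through Construction~\ref{constr:limit} and additionally note that the mediating maps are lattice homomorphisms, which the paper leaves implicit.
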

\begin{proof}
	Let us identify $\constr{P}(\cat{C})$ with its image in $\constr{L}(\cat{C})$,
	and consider the parallel pair $(f,\alpha),(g,\beta): (P,\func{X}) \to (Q,\func{Y})$ of posetal maps.
	Let $(E,e:E \subseteq P)$ be the equaliser of $f$ and $g$ in $\FinPos$,
	where $E$ is identified with its image in $P$ under $e$, and $(\func{W},\gamma)$ be the equaliser of $\alpha_{[e]}$ and $\beta_{[e]}$ in $\Func([E],\cat{C})$.
	We wish to show $\func{W}$ is local,
	so let $[a,a'] \supseteq [b,b'] \subseteq [c,c']$ be an atomic cospan of intervals in $E$ with pullback $[d,d']$.
	By Lemma \ref{lemma:dlatposcont}, $e$ is a lattice homomorphism,
	so by Corollary \ref{cor:lathompreservesatomiccospans},
	the image of the cospan under $e$ is an atomic cospan in $P$ with pullback $[d,d']$,
	and similarly for $[f][d,d']$ under $f \circ e = g \circ e$.
	Since $\func{X}$ and $\func{Y}$ are local, we are left to prove that the square $\func{W}([d,d'] \supseteq [a,a'],[c,c'] \supseteq [b,b'])$ of pointwise equalisers of two pullback squares is again a pullback.
	Our result hence follows either by general preservation of limits by limits, or by a diagram chase on the following:
\figequaliserofpullbacksquares
\end{proof}

\begin{nprop}[]
\label{prop:posdiagsubcatprodclosed}
    Let $\cat{C}$ be a category with finite products.
    The subcategory $\constr{P}(\cat{C})$ is closed under finite products taken in $\constr{L}(\cat{C})$.
\end{nprop}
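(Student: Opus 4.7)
My plan is to use Construction~\ref{constr:limit} to identify the product of two posetal diagrams in $\constr{L}(\cat{C})$ explicitly, at which point closure reduces to two checks: that the shape is a distributive lattice, and that the resulting pointwise-product labelling is local. I would handle the nullary case (terminal object) separately: the one-element lattice with labelling constant at the terminal object of $\cat{C}$ is trivially in $\constr{P}(\cat{C})$, so it suffices to treat binary products.

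For two posetal diagrams $(P,\func{X})$ and $(Q,\func{Y})$, Construction~\ref{constr:limit} yields a product in $\constr{L}(\cat{C})$ whose shape is $P \times Q$ (the product in $\FinPos$) and whose labelling $\func{Z} : [P \times Q] \to \cat{C}$ is computed pointwise. Invoking Lemma~\ref{lemma:prodintintprod} to identify $[P \times Q] \cong [P] \times [Q]$, this labelling is concretely $[(a,b),(a',b')] \mapsto \func{X}[a,a'] \times \func{Y}[b,b']$. That $P \times Q$ is itself a distributive lattice follows from Lemma~\ref{lemma:dlatposcont}: since $\FinDLat \hookrightarrow \FinPos$ preserves finite products, the product in $\FinPos$ coincides with the product in $\FinDLat$, which is a distributive lattice with componentwise meets and joins.

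The heart of the argument is the locality of $\func{Z}$. Let $[u] \supseteq [v] \subseteq [w]$ be an atomic cospan in $[P \times Q]$. Because meets and joins in the product lattice are componentwise, Proposition~\ref{prop:latatomiciffjoin} implies that each of the two projected cospans in $[P]$ and $[Q]$ is atomic. Moreover, Lemma~\ref{lemma:intervalmeetsemilat} shows that the pullback of $[u] \supseteq [v] \subseteq [w]$ in the thin poset $[P \times Q]$ projects coordinatewise to the pullbacks of the projected cospans in $[P]$ and $[Q]$. By the locality of $\func{X}$ and $\func{Y}$, the images of these projected squares in $\cat{C}$ are pullback squares, and since products in $\cat{C}$ preserve pullbacks (as limits commute with limits), the image of the original square under $\func{Z}$ is a product of pullback squares and hence itself a pullback. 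This establishes that $\func{Z}$ is local and so $(P \times Q, \func{Z})$ lies in $\constr{P}(\cat{C})$.

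I do not expect a significant obstacle: once the product is identified explicitly, the componentwise structure of joins and meets in $P \times Q$ together with Proposition~\ref{prop:latatomiciffjoin} and Lemma~\ref{lemma:intervalmeetsemilat} completely controls atomic cospans in the product shape, and the remainder is the standard observation that products of pullback squares are pullback squares. The only subtlety is keeping track of the identification $[P \times Q] \cong [P] \times [Q]$ when reading off the pullback square, but this is precisely what Lemma~\ref{lemma:prodintintprod} provides.
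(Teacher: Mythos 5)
Your proposal is correct and follows essentially the same route as the paper's proof: identify the product via Lemma~\ref{lemma:prodintintprod} as $(P\times Q,\ ([a,a'],[b,b'])\mapsto \func{X}[a,a']\times\func{Y}[b,b'])$, observe that the projections preserve atomic cospans and their pullbacks (the paper packages your componentwise meet/join argument as Corollary~\ref{cor:lathompreservesatomiccospans} applied to the lattice-homomorphism projections), and conclude locality from that of $\func{X},\func{Y}$ together with the fact that a product of pullback squares is a pullback, handling the terminal object separately. The only difference is that you spell out a few steps the paper leaves implicit, such as $P\times Q$ being a distributive lattice.
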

\begin{proof}
    Let $(P,\func{X}),(Q,\func{Y})$ be posetal diagrams,
    and denote their product in $\constr{L}(\cat{C})$ by $(P\times Q, \func{W})$.
    By Lemma \ref{lemma:prodintintprod}, we have an isomorphism $[P \times Q] \cong [P] \times [Q]$,
    under which $\func{W}$ acts by $([a,a'],[b,b']) \mapsto \func{X}[a,a'] \times \func{Y}[b,b']$.
    By Lemma \ref{lemma:dlatposcont}, $P \times Q$ is a lattice, and thus by Corollary \ref{cor:lathompreservesatomiccospans} the projections $[P\times Q] \to [P],[Q]$ preserve atomic cospans and their pullbacks.
    Since $\func{X},\func{Y}$ are local, so is $\func{W}$.
    Furthermore, the terminal object $(1,\Delta_1)$ is always local, so the result holds.
\end{proof}

\begin{ncor}
	\label{cor:mainresult}
	If $\cat{C}$ has all finite limits, then $\constr{P}(\cat{C})$ has all finite limits.
\end{ncor}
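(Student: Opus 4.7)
The plan is to assemble the corollary directly from the three results immediately preceding it, exploiting the standard fact that a category has all finite limits as soon as it has finite products and equalisers.

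First I would apply Corollary \ref{cor:jcompletelabelint} to conclude that $\constr{L}(\cat{C})$ has all finite limits under the hypothesis. Then Proposition \ref{prop:posdiagsubcatprodclosed} shows that $\constr{P}(\cat{C})$ is closed in $\constr{L}(\cat{C})$ under finite products (including the terminal object), and Proposition \ref{prop:posdiagsubcateqclosed} shows the same for equalisers. In each case, closure means that both the apex of the limit and the cone legs lie in $\constr{P}(\cat{C})$, so the constructions carried out in $\constr{L}(\cat{C})$ provide a candidate product or equaliser within the subcategory. Building any finite limit from finite products and equalisers in the standard way then yields all finite limits of $\constr{P}(\cat{C})$.

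The only subtlety is checking that the universal property transfers from $\constr{L}(\cat{C})$ to $\constr{P}(\cat{C})$: a mediating map from a competing cone in $\constr{P}(\cat{C})$ exists uniquely in $\constr{L}(\cat{C})$, and one must verify that its shape component is a lattice homomorphism. For products this is immediate, since the shape component is the pairing of posetal (hence lattice-homomorphic) legs into a product of distributive lattices. For equalisers, the inclusion $e : E \hookrightarrow P$ of the equaliser poset is a lattice homomorphism by Lemma \ref{lemma:dlatposcont}; since it is an equaliser in $\FinPos$ (hence monic), any monotone map whose post-composition with $e$ preserves meets and joins must itself preserve meets and joins.

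The hard part was already carried out in Lemma \ref{lemma:dlatposcont} and Propositions \ref{prop:posdiagsubcateqclosed} and \ref{prop:posdiagsubcatprodclosed}; relative to those, the corollary presents no further substantive obstacle beyond the bookkeeping above.
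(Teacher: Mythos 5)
Your proposal is correct and follows essentially the same route as the paper: combine the closure of $\constr{P}(\cat{C})$ under finite products and equalisers (Propositions \ref{prop:posdiagsubcatprodclosed} and \ref{prop:posdiagsubcateqclosed}) with the finite completeness of $\constr{L}(\cat{C})$ (Corollary \ref{cor:jcompletelabelint}) and the standard reduction of finite limits to products and equalisers. Your additional check that mediating maps from competing cones have lattice-homomorphism shape components is left implicit in the paper's two-line proof, and since $\constr{P}(\cat{C})$ is a non-full subcategory it is worth making explicit exactly as you do.
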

\begin{proof}
	By Proposition \ref{prop:posdiagsubcateqclosed} and Proposition \ref{prop:posdiagsubcatprodclosed}, $\constr{P}(\cat{C})$ is closed under arbitrary finite limits in $\cat{L}(\cat{C})$.
	But by Corollary \ref{cor:jcompletelabelint}, $\constr{L}(\cat{C})$ is finitely complete, hence so is $\constr{P}(\cat{C})$.
\end{proof}

\bibliographystyle{eptcs}
\bibliography{bibliography}
\end{document}